\documentclass[]{amsart}

\usepackage{stmaryrd}
\usepackage{braket}
\usepackage{enumerate}
\usepackage{proof}
\usepackage{amssymb}
\usepackage{graphicx}

\newtheorem{thm}{Theorem}[section]
\newtheorem{prop}[thm]{Proposition}
\newtheorem{lemma}[thm]{Lemma}
\newtheorem{cor}[thm]{Corollary}
\theoremstyle{definition}

\newcommand\FOT{\ensuremath{\mathrm{FOT}}}
\newcommand\FOTminus{\ensuremath{\mathrm{FOT}^{-}}}
\newcommand\FOTimp{\ensuremath{\FOT(\to)}}
\newcommand\wnot{\mathop{\dot\sim}}

\makeatletter
\newcommand{\wvee}{\mathrel{\vphantom{\lor}\mathpalette\wvee@\relax}}
\newcommand{\wvee@}[2]{\ooalign{\raisebox{.15ex}{\rotatebox[origin=c]{-90}{$\m@th#1\geqslant$}}}}
\makeatother

\newcommand\wexists{\exists^{1}}
\newcommand\wforall{\forall^{1}}
\newcommand\wimp{\mathbin\rightarrowtriangle}

\newcommand\rel{\operatorname{rel}}

\renewcommand\phi\varphi 
\newcommand\dep{\mathop\mathtt{D}}
\newcommand\con{\mathtt{C}}

\renewcommand\iff{\quad\text{iff}\quad}

\newcommand\fotmodels{\models_{\text{FOT}}}
\newcommand\folmodels{\models_{\text{FOL}}}
\newcommand\fotequiv{\equiv_{\text{FOT}}}
\newcommand\folequiv{\equiv_{\text{FOL}}}

\newcommand\FV{\mathop\mathrm{fv}}

\newcommand\M{M}

\title{Intuitionistic Implication in Elementary Team Logics} 
\author{Fredrik Engström}
\address{Department of Philosophy, Linguistics and Theory of Science,
University of Gothenburg, Box 200, SE-405 30 Göteborg, Sweden}
\email{fredrik.engstrom@gu.se}

\author{Juha Kontinen}
\address{Department of Mathematics and Statistics,
University of Helsinki, P.O. Box 68,
FI-00014 University of Helsinki, Finland}
\email{juha.kontinen@helsinki.fi}

\date{\today}

\begin{document}

\begin{abstract} 

The logic FOT is a team-based  logic whose expressive power coincides with
first-order logic at the level of both sentences and open formulas. In
contrast to dependence and independence logics, which can define stronger
second-order team properties, FOT is designed to capture exactly elementary
team properties, modulo the empty team. In this paper we consider two
modifications of FOT. First, we investigate essentially the inclusion atom free fragment
of FOT. Our main result establishes quantifier elimination for the fragment  in  the empty signature. 
Second, we study an extension of FOT by the intuitionistic implication. 
The main conclusion is that adding
this single connective increases the expressive strength so that every second-order sentence can be encoded by an open formula evaluated on the full team. Consequently, validity of formulas is equivalent to validity of full second-order logic.
\end{abstract}

\maketitle

\section{Introduction}

Team semantics is a framework for studying concepts and phenomena that are
inherently collective, involving sets of objects rather than individual
objects. The prime examples of such concepts are, e.g., functional dependence
in database theory and conditional independence  in statistics. The
fundamental ideas underlying team semantics can be traced back to the work of
Hodges~\cite{hodges1997compositional}, while dependence logic and team
semantics as a general logical framework was put forward by 
Väänänen~\cite{Vaananen-book}. Formulas in team-based logics are interpreted over sets of
assignments, called \emph{teams}, instead of single assignments, as is
customary in classical Tarski semantics.

Over the last decade, extensive research has been conducted on the expressive
power and computational complexity of dependence logic and related team-based
logics (see, e.g., \cite{DurandKRV22,KontinenMM22,DurandKV24,10.1145/3771721,Yang13}). Furthermore, the
framework of team semantics has been generalized in various ways
(see, e.g., \cite{Galliani12,DurandHKMV18,HKMV18,10.1145/3834561,HANNULA2022103104}).

We study a variant of dependence logic called FOT~\cite{Kontinen023}. FOT was
specifically designed to match the expressive power of classical first-order
(elementary) logic at both the sentence and formula levels. It is well known
that either of  the tensor disjunction or the existential quantifier of
dependence logic is sufficient to extend the expressive power of a team-based
logic beyond the class of elementary properties (see, e.g., \cite{DurandKRV22}), hence $\FOT$ is defined using an alternate set of primitive operators.

FOT has recently found applications in the study of model theory of
second-order logic~\cite{HYTT26}. Moreover, the extension of 
$\FOT$ with the intuitionistic implication (first defined in  \cite{AbramskyV09}) 
is an extension of the downwards closed logic
InqBT (see \cite{exag032}), which has recently played a central role in
resolving several longstanding open problems in inquisitive first-order
logic~\cite{KontinenCiardelli:26,ciardelli2026}.

The first part of the paper is devoted to the study of the expressive power of a fragment of 
FOT. We prove a quantifier elimination result for this logic in the empty signature and conclude that the corresponding validity problem is decidable.

In the second part, we consider the extension of
FOT by the intuitionistic implication. Although this extension is still
equivalent to first-order logic with respect to sentences, we show that the
expressive power of its open formulas is comparable to that of second-order
logic. As a corollary, we get that the validity problem of the logic is highly undecidable already in the empty signature.

\section{FOT}

Kontinen and Yang take the inclusion atom $\bar{x} \subseteq \bar{y}$ as a
primitive atom in $\FOT$. In the present paper we use a slightly different
presentation. Instead of taking inclusion atoms as primitive, we allow
disequality atoms $\bar x \neq \bar y$ as basic atomic formulas. This change is mostly
notational: the resulting logic is expressively equivalent to the original
presentation of $\FOT$. Indeed, inclusion atoms can be defined in our syntax,
and conversely the basic constructions used here can be simulated in the
language with inclusion atoms. Thus the choice of primitives does not
substantially affect the expressive power of the logic considered below.

Formulas of \FOT\ are given by
\[
\varphi ::= \lambda \mid  \bar x \neq \bar y \mid \wnot\varphi
\mid (\varphi\land\varphi) \mid (\varphi\wvee\varphi)
\mid \wexists x\varphi \mid \wforall x\varphi,
\] 
where $\lambda$ is a first-order atomic formula, including equality: $x=y$,
and $\bar x$ and $\bar y$ are of equal length.


We use the same letter $M$ for a structure and its domain. An assignment over
$M$ is a map from a finite set of variables into $M$. A \emph{team} over $M$
is a set of assignments having a common domain; the empty team is allowed.

If $s$ is an assignment, $x$ is a variable, and $a\in M$, then $s(a/x)$
denotes the assignment which maps $x$ to $a$ and otherwise agrees with $s$.
For a team $X$, put 
\[
X(a/x):=\{s(a/x):s\in X\},
\]
and similarly for tuples $\bar a$ and $\bar x$.
We will also need a global supplementation operation on teams later in the paper: for a function $F\colon X\rightarrow M$, we define
\[
X[F/x]:=\{s(F(s)/x):s\in X\}.
\]
We also write 
\[
X[M/x]:=\{s(a/x):s\in X\textrm{ and }a\in M\}.
\]
The last two operations are used to define semantics for the existential and universal quantifiers in dependence logic.

For a set of variables $V$, define
\[
X\mathop{\upharpoonright}V
:=
\{s\mathop{\upharpoonright}V:s\in X\}.
\]
If $\bar x$ is a tuple of distinct variables, the \emph{full team} over
$\bar x$ is
\[
M^{\bar x}:=\{s:\{\bar x\}\to M\}.
\]
If $X$ is a team let $X(\bar x):=\{s(\bar x):s\in X\}$.

Satisfaction is defined by the following clauses. If $X$ is a team of $M$, then
\[
\begin{aligned}
    M,X \models \lambda
        &\iff M,s \folmodels \lambda \text{ for all }s\in X, \\
     M,X \models \bar x \neq \bar y
        &\iff \text{for all } s \in X, s(\bar x) \neq s(\bar y), \\    
    M,X \models \wnot\varphi
        &\iff X=\emptyset \text{ or } M,X\not\models\varphi, \\
    M,X \models \varphi\wvee\psi
        &\iff M,X\models\varphi \text{ or } M,X\models\psi, \\
    M,X \models \varphi\land\psi
        &\iff M,X\models\varphi \text{ and } M,X\models\psi, \\
    M,X \models \wexists x\varphi
        &\iff M,X(a/x)\models\varphi \text{ for some }a\in M, \\
    M,X \models \wforall x\varphi
        &\iff M,X(a/x)\models\varphi \text{ for all }a\in M,
\end{aligned}
\]
where $X(a/x)=\{s(a/x)\mid s\in X\}$. 

The set of free variables of $\varphi$ is denoted by $\FV(\varphi)$ and is
defined as usual. A formula $\sigma$ is a sentence if
$\FV(\sigma)=\emptyset$. For an $\FOT$-sentence $\sigma$, we write
\[
M\models\sigma
\quad\text{for}\quad
M,\{\emptyset\}\models\sigma,
\]
where $\emptyset$ denotes the unique assignment with empty domain.

\begin{prop}[\cite{kontinen2019logics}]
    \begin{enumerate}
        \item \FOT\ has the empty team property, i.e., $M,\emptyset \models \varphi$ for any model $M$ and any \FOT-formula $\varphi$.
        \item \FOT\ is local, i.e., $M,X \models \varphi$ iff $M,X\mathop\upharpoonright \FV(\varphi) \models \varphi$.
    \end{enumerate}
\end{prop}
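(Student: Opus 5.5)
Both parts will be proved by induction on the construction of $\varphi$, checking each clause of the satisfaction definition. For part (2) the quantifier cases require a mild strengthening, so I will treat the two parts separately.

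\textbf{Empty team property.} The atomic cases $\lambda$ and $\bar x\neq\bar y$ hold vacuously, since their clauses quantify universally over $s\in\emptyset$. The clause for $\wnot\varphi$ holds outright because $X=\emptyset$; no induction hypothesis is needed here, and this is precisely why the weak negation was given that clause. For $\land$ and $\wvee$ the claim follows directly from the induction hypothesis for both conjuncts (respectively for at least one disjunct). For the quantifiers, note that $\emptyset(a/x)=\emptyset$ for every $a\in M$, so $\wforall x\varphi$ and $\wexists x\varphi$ reduce to $M,\emptyset\models\varphi$. For $\wexists$ this uses that $M$ is nonempty, which is the standard convention for structures.

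\textbf{Locality.} I will prove the slightly stronger statement that for every $V\supseteq\FV(\varphi)$ we have $M,X\models\varphi$ iff $M,X\upharpoonright V\models\varphi$. This is needed because the set of free variables changes under quantification. The atomic cases are immediate, since whether $s$ satisfies $\lambda$, or whether $s(\bar x)\neq s(\bar y)$, depends only on $s\upharpoonright\FV$. The Boolean cases are immediate from the induction hypothesis, using $\FV(\varphi),\FV(\psi)\subseteq\FV(\varphi\land\psi)\subseteq V$. For $\wnot\varphi$, also observe that $X=\emptyset$ iff $X\upharpoonright V=\emptyset$.

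The main point is the quantifier case. The key identity is
\[
X(a/x)\upharpoonright(V\cup\{x\}) = (X\upharpoonright V)(a/x),
\]
which I would check elementwise: for $s\in X$, the assignment $s(a/x)$ restricted to $V\cup\{x\}$ equals $(s\upharpoonright V)(a/x)$. Since $\FV(\varphi)\subseteq V\cup\{x\}$, the induction hypothesis applied to $V\cup\{x\}$ gives, for each $a\in M$,
\[
M,X(a/x)\models\varphi \iff M,(X\upharpoonright V)(a/x)\models\varphi.
\]
Here I apply the hypothesis twice, once to each of the two teams, both of which restrict to the same team. Taking "for some $a$" or "for all $a$" of these equivalences yields the claim for $\wexists x\varphi$ and $\wforall x\varphi$.

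I expect the only real obstacle to be a bookkeeping one. The induction has to be stated for all supersets $V$ of the free variables, or equivalently as "two teams with the same restriction to $\FV(\varphi)$ agree on $\varphi$". A naive induction on $V=\FV(\varphi)$ alone fails at the quantifier step, because $\FV(\varphi)$ may or may not contain $x$.
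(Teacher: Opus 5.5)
Your proof is correct and follows the same route as the paper. The paper cites \cite{kontinen2019logics} for this proposition, and for the extension in Proposition~\ref{prop4.2} it says only that the empty team property and locality follow by structural induction; you carry that induction out clause by clause, including the convention $M\neq\emptyset$ needed for $\wexists$. One small correction to your closing remark: the strengthening to supersets $V\supseteq\FV(\varphi)$ is convenient but not necessary, since with the induction hypothesis quantified over all teams the unstrengthened statement also goes through, using $X(a/x)\upharpoonright\FV(\varphi)=(X\upharpoonright\FV(\wexists x\varphi))(a/x)\upharpoonright\FV(\varphi)$, which holds whether or not $x\in\FV(\varphi)$.
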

In case (2) above, $X\mathop\upharpoonright \FV(\varphi)=\{ s\mathop\upharpoonright \FV(\varphi)\ |\ s\in X   \}$. 

We use the following abbreviations in $\FOT$:
\[
\varphi\wimp\psi:=\wnot\varphi\wvee\psi,
\qquad
\top:=\wforall z\,(z=z),
\qquad
\bot:=\wnot\top.
\]
Thus $\top$ is satisfied by every team, whereas $\bot$ is satisfied exactly
by the empty team.

Observe that $\wnot(x=y)$ is not in general equivalent to $x\neq y$. Also note that, 
the inclusion atom $\bar x\subseteq\bar y$ can be directly defined by the usual inclusion-style definition
\[
\bar x\subseteq\bar y \quad\equiv\quad
\wforall\bar z\bigl(\bar z\neq\bar y \wimp \bar z\neq\bar x\bigr),
\]
where $\bar z$ is a fresh tuple of the same length as $\bar x$ and $\bar y$.

Conversely, tuple disequality is definable using inclusion:
\[
\bar x\neq\bar y\quad\equiv\quad
\wforall\bar v\,\wforall\bar w
\bigl(
\bar v\bar w\subseteq\bar x\bar y
\wimp
\wnot(\bar v=\bar w)
\bigr),
\]
where $\bar v,\bar w$ are fresh tuples of the appropriate lengths.
Thus the two presentations are expressively equivalent.

For a first-order formula $\alpha$, we write
$\alpha^\ast$ for the $\FOT$-formula obtained from $\alpha$ by replacing
$\neg,\vee,\exists,\forall$ by $\wnot,\wvee,\wexists,\wforall$, respectively.
Conversely, if $\varphi$ is a \FOT-formula, we write $\varphi^\sharp$ for the
first-order formula we obtain by replacing $\wnot,\wvee,\wexists,\wforall$ by
$\neg,\vee,\exists,\forall$, respectively. For primitive tuple disequality, put
\[
(\bar x\neq\bar y)^\sharp
:=
\neg\bigwedge_{i=1}^{|\bar x|}x_i=y_i.
\]

\begin{prop} 
    For any structure $M$, any $\FOT$-formula $\varphi$, and
    any assignment $s$ with domain containing the free variables of
    $\varphi$,
\[
M,s \folmodels \varphi^\sharp
\quad\text{iff}\quad
M,\{s\} \fotmodels \varphi.
\]
\end{prop}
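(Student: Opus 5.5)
We argue by induction on the construction of $\varphi$, proving the claim simultaneously for all assignments $s$ whose domain contains $\FV(\varphi)$. The key observation is that singleton teams behave classically. The only nonempty subteam of $\{s\}$ is $\{s\}$ itself, and the team operations used in the semantics preserve being a singleton: $\{s\}(a/x)=\{s(a/x)\}$. So each team clause should collapse to the corresponding Tarskian clause for $\varphi^\sharp$.

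\emph{Base cases.} For a first-order atom $\lambda$ we have $\lambda^\sharp=\lambda$, and $M,\{s\}\models\lambda$ iff $M,s\folmodels\lambda$, directly from the clause "for all $s'\in\{s\}$". For a disequality atom, $M,\{s\}\models\bar x\neq\bar y$ iff $s(\bar x)\neq s(\bar y)$. This holds iff some coordinate differs, i.e.\ iff $M,s\folmodels\neg\bigwedge_i x_i=y_i$, which is $(\bar x\neq\bar y)^\sharp$.

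\emph{Inductive steps.}
\begin{itemize}
\item For $\wnot\varphi$: since $\{s\}\neq\emptyset$, we get $M,\{s\}\models\wnot\varphi$ iff $M,\{s\}\not\models\varphi$. By the induction hypothesis this holds iff $M,s\not\folmodels\varphi^\sharp$, i.e.\ iff $M,s\folmodels\neg\varphi^\sharp$.
\item For $\land$ and $\wvee$: both semantic clauses are Boolean, so the claim follows immediately from the induction hypothesis.
\item For $\wexists x\varphi$: $M,\{s\}\models\wexists x\varphi$ iff $M,\{s(a/x)\}\models\varphi$ for some $a\in M$. Apply the induction hypothesis to the assignment $s(a/x)$, whose domain contains $\FV(\varphi)\subseteq\FV(\wexists x\varphi)\cup\{x\}$. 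This gives $M,s(a/x)\folmodels\varphi^\sharp$ for some $a$, which is $M,s\folmodels\exists x\varphi^\sharp$.
\item For $\wforall x\varphi$: the argument is identical, with "for all $a$" in place of "for some $a$".
\end{itemize}

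There is no real obstacle here; the proof is routine bookkeeping. The one point needing care is to state the induction hypothesis for all assignments, not only the fixed $s$, so that it applies to the modified assignment $s(a/x)$ in the quantifier cases. One should also check there that the domain condition on free variables still holds.
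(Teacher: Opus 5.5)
Your proof is correct and follows the paper's approach. The paper also uses a structural induction in which each team clause collapses to its Tarskian counterpart on singleton teams, using $\{s\}\neq\emptyset$ for $\wnot$ and $\{s\}(a/x)=\{s(a/x)\}$ for the quantifiers; you simply spell out the cases the paper calls immediate, and you rightly state the induction hypothesis for all assignments.
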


\begin{proof}
The proof is by induction on $\varphi$. The atomic cases are immediate. The induction 
cases follow directly from the corresponding
truth conditions on singleton teams. 
\end{proof}

\begin{cor}
For any $\FOT$-sentence $\sigma$ and any model $M$,
\[
M \folmodels \sigma^\sharp
\quad\text{iff}\quad
M \fotmodels \sigma.
\]
\end{cor}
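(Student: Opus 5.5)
The corollary is the special case of the preceding proposition in which the assignment has empty domain. The proof is short and needs no new induction.

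Let $\sigma$ be an $\FOT$-sentence, so $\FV(\sigma)=\emptyset$, and let $\emptyset$ denote the unique assignment with empty domain. Its domain trivially contains all free variables of $\sigma$, so the preceding proposition applies to $s=\emptyset$. It gives
\[
M,\emptyset \folmodels \sigma^\sharp
\quad\text{iff}\quad
M,\{\emptyset\} \fotmodels \sigma .
\]

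It remains to unfold the two sides. On the right, $M,\{\emptyset\}\fotmodels\sigma$ is by definition what $M\fotmodels\sigma$ means for an $\FOT$-sentence.

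On the left, $\sigma^\sharp$ is a first-order sentence, because the translation $\cdot^\sharp$ preserves free variables. This is clear clause by clause: the tuple-disequality clause introduces only the variables $\bar x,\bar y$, and the quantifier clauses bind exactly the same variables as before. For a first-order sentence, satisfaction under the empty assignment is the same as truth in $M$, so $M,\emptyset\folmodels\sigma^\sharp$ iff $M\folmodels\sigma^\sharp$. Chaining the equivalences proves the corollary.

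The only point needing any care is the bookkeeping that $\FV(\sigma^\sharp)=\FV(\sigma)$. This is a routine induction over the clauses, and is not a real obstacle.
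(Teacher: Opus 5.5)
Your proof is correct and follows the paper's intended argument: the corollary is the preceding proposition instantiated at the empty assignment $\emptyset$, with $M\fotmodels\sigma$ unfolding by definition to $M,\{\emptyset\}\fotmodels\sigma$. Your check that $\cdot^\sharp$ preserves free variables, so that $\sigma^\sharp$ is a first-order sentence, is routine bookkeeping that the paper leaves implicit.
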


We recall the expressive completeness result for $\FOT$. If
$\bar x$ is a tuple of distinct variables and $R$ is a fresh
$n$-ary relation symbol, write
\[
\rel_{\bar x}(X):=\{s(\bar x):s\in X\}.
\]
\begin{thm}[\cite{Kontinen023}]

For every $\FOT$-formula $\varphi(\bar x)$ there is a first-order sentence
$\tau_\varphi(R)$ such that
\[
M,X\models\varphi \iff (M,\rel_{\bar x}(X))\models\tau_\varphi(R).
\]
Conversely, for every first-order sentence $\gamma(R)$ such that
\[
    (M,\emptyset)\models\gamma(R)
\]
for every structure $M$, there exists an $\FOT$-formula
$\varphi_\gamma(\bar x)$ such that, for every structure $M$ and every
team $X$ over $\bar x$,
\[
    M,X\models\varphi_\gamma \iff (M,\rel_{\bar x}(X))\models\gamma(R).
\]
\end{thm}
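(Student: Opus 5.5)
Both directions go by induction on formulas; the second direction is the substantial one.

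For the first direction I induct on $\varphi$ and define $\tau_\varphi(R)$ uniformly for a fixed tuple $\bar x$ of variables containing $\FV(\varphi)$; locality lets me extend $\bar x$ freely. The clauses are:
\begin{itemize}
\item For a first-order atom $\lambda$: $\tau_\lambda:=\forall\bar x(R\bar x\to\lambda)$.
\item For $\bar y\neq\bar z$ with $\bar y,\bar z$ among $\bar x$: $\tau:=\forall\bar x(R\bar x\to\neg\bar y=\bar z)$.
\item For $\wnot\varphi$: $\tau:=\neg\exists\bar x R\bar x\vee\neg\tau_\varphi$.
\item For $\land$ and $\wvee$: the Boolean conjunction and disjunction of the $\tau$'s.
\end{itemize}
The quantifier cases are the only ones needing care. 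Writing $\bar x=x\bar u$, the team $X(a/x)$ has relation $\{(a,\bar b):\exists c\,R(c,\bar b)\}$. So $\tau_{\wexists x\varphi}:=\exists y\,\tau_\varphi(R)[R(x\bar u)\mapsto x=y\wedge\exists x'R(x'\bar u)]$, with $y$ fresh, where the substitution replaces every occurrence of $R$ by this first-order definition. The universal case uses $\forall y$ instead. Correctness follows clause by clause from the semantics, using the standard substitution lemma for defined relations.

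For the converse, given $\gamma(R)$ true of the empty relation, I aim to express in $\FOT$ that ``the relation $X(\bar x)$ satisfies $\gamma$''. I first put $\gamma$ into a form where every occurrence of $R$ is atomic, $R\bar t$ with $\bar t$ a tuple of variables. I then translate $\gamma$ into a formula $\theta$ evaluated on the team $X$, with the bound variables $\bar v$ of $\gamma$ handled by the weak quantifiers $\wexists,\wforall$. These are exactly right because they assign one constant value across the whole team. Under this translation, after $\wexists/\wforall$ have fixed $\bar v\mapsto\bar b$, the team is $X(\bar b/\bar v)$, and the clauses are:
\begin{itemize}
\item A non-$R$ atom $\alpha(\bar v)$ is translated by $\alpha$ itself; it holds on the team iff it holds of $\bar b$, provided the team is nonempty.
\item $R\bar t$ is translated by $\bar t\subseteq\bar x$, which is definable by the displayed inclusion formula; on $X(\bar b/\bar v)$ it says exactly $\bar b_{\bar t}\in X(\bar x)$, provided the team is nonempty.
\item $\neg,\vee,\wedge$ are translated by $\wnot,\wvee,\wedge$; these are classical on nonempty teams, and the quantifier steps preserve nonemptiness.
\end{itemize}
Thus for nonempty $X$ an induction gives $M,X\models\theta$ iff $(M,\rel_{\bar x}(X))\models\gamma$. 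For $X=\emptyset$ both sides hold: the left side by the empty team property, the right side by the hypothesis on $\gamma$. So $\varphi_\gamma:=\theta$ works.

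The main obstacle is the bound-variable bookkeeping. The variables $\bar v$ must be fresh and disjoint from $\bar x$, and the inclusion atom must compare fixed values against the column $\bar x$ of the team. I would first try to verify carefully that $\bar t\subseteq\bar x$ evaluated on $X(\bar b/\bar v)$ means precisely $\bar b\in\rel_{\bar x}(X)$ (restricted to the positions $\bar t$). This requires that the inclusion atom is defined via $\wforall\bar z(\bar z\neq\bar x\wimp\bar z\neq\bar t)$ and that $\wnot$ behaves classically on nonempty teams; both facts are established earlier in the paper.
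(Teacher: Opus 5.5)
Your proposal is correct: both inductions go through, including the substitution for the weak quantifiers in the first direction and the check that $\bar t\subseteq\bar x$, evaluated on $X(\bar b/\bar v)$, says exactly $\bar b_{\bar t}\in\rel_{\bar x}(X)$ in the converse. The paper gives no proof of this theorem and only cites Kontinen and Yang; your argument is essentially the standard one from that source, namely weak quantifiers fixing constant values to simulate first-order quantifiers, inclusion atoms simulating occurrences of $R$, and the empty team handled separately by the hypothesis on $\gamma$.
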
 
Thus $\FOT$ captures exactly the elementary team properties, modulo the
empty-team property.

\section{\FOTminus: Losing $\neq$}

We now turn from sentences to formulas with free variables. For sentences, the weak
connectives of \FOT\ behave much like their first-order counterparts. In fact, for sentences \FOTminus is still equivalent to first-order logic. Over open
formulas, however, the team-semantical interpretation becomes visible: a formula is
evaluated not at a single assignment, but at a whole team of assignments. This means
that first-order equivalence of the corresponding formulas need not determine
equivalence in team semantics.

To make this precise, write $\varphi \fotequiv \psi$ for \FOT-formulas $\varphi$ and
$\psi$ if, for every model $M$ and every team $X$,
\[
M,X \models \varphi \quad\text{iff}\quad M,X \models \psi.
\]

Let $\FOTminus$ be the fragment of \FOT\ obtained by removing primitive
disequality atoms. Thus, formulas of $\FOTminus$ are given by
\[
\varphi ::= \lambda \mid \wnot\varphi
\mid (\varphi\land\varphi) \mid (\varphi\wvee\varphi)
\mid \wexists x\varphi \mid \wforall x\varphi,
\]
where $\lambda$ is a first-order atomic formula, including equality atoms.
Observe that this is exactly the inclusion-free fragment of $\FOT$ as defined
in \cite{kontinen2019logics}. 

The following proposition shows that, already for this fragment, the passage
from pointwise truth to team truth can destroy equivalences that hold
classically.

\begin{prop}
There are $\FOTminus$-formulas $\varphi$ and $\psi$ such that
$\varphi^\sharp \folequiv \psi^\sharp$, but $\varphi \not\fotequiv \psi$.
\end{prop}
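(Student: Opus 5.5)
The plan is to exhibit a concrete pair of $\FOTminus$-formulas whose classical translations are propositionally equivalent, yet which come apart on a two-element team. The natural source is the weak negation: $\wnot$ is a global negation at the team level, so it does not commute with the pointwise atomic clauses. I take
\[
\varphi := \wnot(x=y)\wvee (x=y), \qquad \psi := \top .
\]
Classically, $\varphi^\sharp = \neg(x=y)\vee x=y$ is valid, and $\psi^\sharp=\forall z\,(z=z)$ is valid too, so $\varphi^\sharp\folequiv\psi^\sharp$. One can check that $\varphi$ nevertheless holds on every team: if $M,X\models x=y$, the right disjunct holds; otherwise $X\neq\emptyset$ and $M,X\not\models x=y$, so $\wnot(x=y)$ holds. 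So this pair fails to separate.

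The separating example should therefore be a classical tautology built from two weak negations that both fail on a mixed team. I take
\[
\varphi := \wnot(x=y)\land\wnot\wnot(x=y), \qquad \psi:=\bot .
\]
Then $\varphi^\sharp=\neg(x=y)\wedge\neg\neg(x=y)$ is unsatisfiable, and so is $\psi^\sharp=\neg\forall z(z=z)$; hence $\varphi^\sharp\folequiv\psi^\sharp$.

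For the team side, I first note that $\wnot\wnot\chi$ is equivalent to $\chi$ on every team. On $X=\emptyset$ both sides hold by the empty team property. On $X\neq\emptyset$, $\wnot\wnot\chi$ holds iff $\wnot\chi$ fails iff $\chi$ holds.

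So, on a nonempty team, $\varphi$ says: ``$x=y$ fails on $X$, and $x=y$ holds on $X$''. This is contradictory, so this choice also fails to separate. The lesson is that a separating example needs a disjunction, since $\wvee$ is the team-level ``or'' and does not distribute pointwise.

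Hence the final choice is
\[
\varphi := (x=y)\wvee\wnot(x=y), \qquad \psi := (x=y)\wvee (x\neq y)^{\ast},
\]
where the second disjunct must be an $\FOTminus$-formula. I realize $\neg(x=y)$ pointwise by the formula
\[
\chi := \wforall z\bigl(\wnot(z=x)\wvee\wnot(z=y)\bigr),
\]
but this still uses a team-level negation, so it is not yet a genuine pointwise realization.

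A cleaner route avoids that issue. I compare $\varphi=(x=y)\wvee\wnot(x=y)$, which holds on every team, with
\[
\psi := (x=y)\wvee\wforall z\bigl((z=x)\wimp\wnot(z=y)\bigr),
\]
whose classical translation is $x=y\vee\forall z(z=x\to\neg z=y)$. This is classically valid. The key computation, which is the main obstacle, is to check that on the team $X=\{s_1,s_2\}$ over a two-element model with $s_1(x)=s_1(y)=0$ and $s_2(x)=0$, $s_2(y)=1$, the formula $\psi$ fails.

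First, $x=y$ fails on $X$ because of $s_2$. For the second disjunct, take $z=0$: then $z=x$ holds on all of $X(0/z)$, and $\wnot(z=y)$ holds because $z=y$ fails at $s_2$. So the implication $(z=x)\wimp\wnot(z=y)$ holds for $z=0$, since its right disjunct holds. For $z=1$, $z=x$ fails on the team, so $\wnot(z=x)$ holds and the implication holds as well. Hence $\psi$ holds on $X$, and this candidate also fails to separate.

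Since I cannot yet settle the separating pair, I commit to searching among formulas of the shape $\alpha\wvee\beta$ versus $\beta\wvee\alpha'$, where $\alpha^\sharp\folequiv\alpha'^\sharp$. I will verify each candidate on this two-assignment team over a two-element model, where I expect a mixed team to break any disjunctive equivalence such as $(x=y)\wvee\wnot(x=y)$ against a pointwise-equivalent rewriting.
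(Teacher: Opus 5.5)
\textbf{There is a genuine gap: you never exhibit a separating pair.} The statement is purely existential, so the whole proof is to produce two formulas and verify them. Your checks of your candidates are correct, but every candidate fails to separate. Your closing plan, comparing $\alpha\wvee\beta$ with $\beta\wvee\alpha'$ where $\alpha^\sharp\folequiv\alpha'^\sharp$, is circular: it already assumes a separating pair $\alpha,\alpha'$.

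Your first two candidates could never have worked. On a nonempty team, $\wnot$, $\land$ and $\wvee$ act as ordinary Boolean connectives on the single truth value ``$x=y$ holds on all of $X$''. So any classical equivalence between Boolean combinations of the one atom $x=y$ carries over unchanged to team semantics. Your quantified candidate $\wforall z(\wnot(z=x)\wvee\wnot(z=y))$ fails for a similar reason. On a nonempty team it says that $x$ and $y$ are not both constant with the same value, and this already follows from $\wnot(x=y)$. Hence its weak disjunction with $x=y$ is again valid. Your heuristic that one needs a disjunction is also off target: the paper's separating formula is a conjunction.

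\textbf{The missing idea.} Classically, one assignment must witness every conjunct. In team semantics, $\wnot\lambda$ holds when $\lambda$ fails at \emph{some} assignment of the team, and different conjuncts may be witnessed by different assignments. Dually, $\wexists$ must choose one constant witness for the whole team.

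The paper uses the first point. It takes $\varphi:=\sigma_2\land\wnot(x=y)\land\wnot(y=z)\land\wnot(x=z)$, where $\sigma_2$ says the domain has exactly two elements, and $\psi:=\wnot(x=x)$.
\begin{itemize}
\item Classically, $\varphi^\sharp$ is unsatisfiable by pigeonhole: two elements cannot give $x,y,z$ pairwise distinct values. $\psi^\sharp$ is unsatisfiable too.
\item Over $M=\{0,1\}$, the team $\{(0,1,0),(1,0,0)\}$ over $x,y,z$ satisfies $\varphi$, because each equality fails at one of the two assignments. It does not satisfy $\psi$, since it is nonempty.
\end{itemize}

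A shorter witness uses the second point: compare $\con(x)=\wexists z(z=x)$ with $\top$.
\begin{itemize}
\item Both translations, $\exists z(z=x)$ and $\forall z(z=z)$, are classically valid.
\item $\con(x)$ fails on any team in which $x$ takes two values, while $\top$ holds on every team.
\end{itemize}
Either pair would complete your proof.
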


\begin{proof}
Let
\[
\sigma_2 :=
\wexists u \wexists v
\bigl(\wnot(u=v) \land \wforall w (w=u \wvee w=v)\bigr).
\]
On singleton teams, $\sigma_2$ expresses that the domain has exactly two
elements. 

Now define
\[
\varphi :=
\sigma_2
\land \wnot(x=y)
\land \wnot(y=z)
\land \wnot(x=z).
\]
Classically, the formula $\varphi^\sharp$ is unsatisfiable: in a two-element
model no assignment can give pairwise distinct values to $x,y,z$. 

Thus, if $\psi$ is $\wnot(x=x)$ then $\varphi^\sharp$ is equivalent to
$\psi^\sharp$. However, $\varphi$ is not equivalent to $\psi$ in team
semantics: Let $M=\{0,1\}$ and let $X$ be the team
\[
\begin{array}{ccc}
x & y & z \\ \hline
0 & 1 & 0 \\
1 & 0 & 0
\end{array}
\]
over $M$. Since $M$ has exactly two elements, we have $M,X \models \sigma_2$.
Moreover,
\[
M,X \models \wnot(x=y), \qquad
M,X \models \wnot(y=z), \qquad
M,X \models \wnot(x=z),
\]
because each of the equalities $x=y$, $y=z$, and $x=z$ fails on at least one
assignment in $X$. Hence $M,X \models \varphi$.

On the other hand, $M,X \not\models \psi$, since $x=x$ is satisfied by every
assignment in $X$, and $X$ is nonempty. Therefore
\[
\varphi \not\fotequiv \psi,
\]
even though $\varphi^\sharp \folequiv \psi^\sharp$.
\end{proof}

Thus, the team-semantical denotation of a $\FOTminus$-formula is not determined
by the ordinary first-order denotation of its translation.

In the following two lemmas, $\bar x$ is a tuple of distinct variables
disjoint from $\bar y$ and bound variables are renamed
when necessary to avoid clashes.

\begin{lemma} 
Let $X$ and $Y$ be two teams defined over variables $\{y_1, \ldots, y_n\}$, $n \geq 1$,
and over a model $M$ such that 
\[ M,X \models y_i=y_j \iff  M,Y \models y_i=y_j \] 
for all $i,j \in \{1,\ldots,n\}$. Suppose also that $|X(y_i)| \geq 2$ and 
$|Y(y_i)| \geq 2$ for all $1 \leq i \leq n$. 

Then for all $\bar a \in M$, $M,X
[\bar a/\bar x] \models \varphi$ iff $M,Y[\bar a/\bar x] \models \varphi$ for
all \FOTminus-formulas $\varphi$ in the empty signature.
\end{lemma}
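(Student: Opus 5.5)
The proof is by induction on $\varphi$. The claim is strengthened so that it holds \emph{simultaneously for all} tuples $\bar x$ of distinct variables disjoint from $\bar y$ and all $\bar a$ of matching length. This strengthening is what lets the quantifier step go through: a quantifier adds one variable to the front of $\bar x$. Here $X[\bar a/\bar x]$ means $X(\bar a/\bar x)$, i.e.\ every assignment is extended by the same constants. Following the convention stated before the lemma, bound variables are renamed so that no quantifier binds a variable among $\bar y$. We also assume $\FV(\varphi)\subseteq\{\bar x,\bar y\}$, so that $\varphi$ is evaluated on teams whose domain contains its free variables.

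\emph{Atomic case.} In the empty signature the atoms are equalities, of three kinds.
\begin{enumerate}
\item For $y_i=y_j$: the team $X[\bar a/\bar x]$ restricted to $\bar y$ is just $X$. So truth of $y_i=y_j$ is unchanged by the supplementation, and the hypothesis gives the equivalence directly.
\item For $x_k=x_l$: the atom holds in $X[\bar a/\bar x]$ iff $a_k=a_l$, since $X$ is nonempty because $|X(y_1)|\ge 2$. The same holds for $Y$.
\item For $x_k=y_i$ (or $y_i=x_k$): the atom would require $s(y_i)=a_k$ for all $s\in X$. This is impossible because $|X(y_i)|\geq 2$, so the atom fails in $X[\bar a/\bar x]$. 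By the same argument it fails in $Y[\bar a/\bar x]$.
\end{enumerate}

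\emph{Connective cases.} The cases for $\land$ and $\wvee$ are immediate from the induction hypothesis for the same $\bar x,\bar a$. For $\wnot\psi$, both $X[\bar a/\bar x]$ and $Y[\bar a/\bar x]$ are nonempty. Hence $\wnot\psi$ holds in each exactly when $\psi$ fails, and the induction hypothesis finishes the case.

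\emph{Quantifier cases.} These are where the strengthened hypothesis is used. Consider $\wexists x\,\psi$ or $\wforall x\,\psi$. After renaming, $x$ is not among $\bar y$.
\begin{enumerate}
\item If $x$ is not among $\bar x$, then $X[\bar a/\bar x](b/x)=X[\bar a b/\bar x x]$, and $\bar x x$ is still a tuple of distinct variables disjoint from $\bar y$.
\item If $x=x_k$, then $X[\bar a/\bar x](b/x)=X[\bar a'/\bar x]$, where $\bar a'$ is $\bar a$ with $a_k$ replaced by $b$.
\end{enumerate}
In either case the induction hypothesis for $\psi$ applies to the new tuple, for every $b\in M$. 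The same rewriting holds with $Y$ in place of $X$. Therefore ``for some $b$'' and ``for all $b$'' agree between $X$ and $Y$.

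I expect the main obstacle to be this bookkeeping: setting up the induction statement generally enough, quantifying over all $\bar x$ and $\bar a$, so that quantified formulas reduce to the hypothesis again. One must also check that renaming bound variables is harmless. This follows from locality (Proposition on the empty team property and locality), since it lets us discard or rename coordinates not free in the formula. The atomic case is the only place the hypotheses on $X$ and $Y$ are used. There, $|X(y_i)|\ge 2$ plays the key role of killing every mixed atom $x_k=y_i$.
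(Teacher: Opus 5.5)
Your proof is correct and is essentially the paper's argument: induction on $\varphi$ with the same three atomic cases. These are $y_i=y_j$ from the hypothesis, $x_k=x_l$ by nonemptiness, and $x_k=y_i$ failing on both sides because $|X(y_i)|,|Y(y_i)|\geq 2$, followed by routine connective and quantifier steps; you simply make explicit what the paper leaves as ``straightforward'', namely that the induction ranges over all tuples $\bar x$ and $\bar a$ simultaneously and that bound variables are renamed away from $\bar y$.
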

\begin{proof} 
By induction over the structure of $\varphi$. The three base cases $y_i=y_j$,
$x_i=x_j$ and $x_i = y_j$ follow directly from the assumptions. The
inductive cases, $\land$, $\wvee$, $\wnot$, $\wexists$ and $\wforall$, are
all straightforward.
\end{proof}

We can remove the assumptions on the sizes of the projections by recording
which variables are constant. Let
\[
\con(x):=\wexists z\,(z=x),
\]
where $z$ is fresh. Then, for every team $X$,
\[
M,X\models\con(x) \iff |X(x)|\leq1.
\]
In particular, if $X$ is nonempty, then $\con(x)$ holds exactly when $x$ has
a unique constant value throughout $X$.

\begin{lemma}\label{lemma:eq-con-invariance}
    Let $M$ be a model, and $X$ and $Y$ be two nonempty teams with domain $\{y_1, \ldots, y_n\}$ such that 
    \begin{align*} 
    M,X \models y_i=y_j &\iff  M,Y \models y_i=y_j, \text{ and} \\
    M,X \models \con(y_i) &\iff  M,Y \models \con(y_i),
    \end{align*}
    for all $i,j \in \{1,\ldots,n\}$. Then there is a bijection $f: M \to M$ such that for all $\bar a \in M$,
    \[ M,X[\bar a/\bar x] \models \varphi \iff M,Y[f(\bar a)/\bar x] \models \varphi\] for all \FOTminus-formulas $\varphi$ in the empty signature. 
\end{lemma}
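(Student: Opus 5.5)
The plan is to reduce the statement to a strengthened version of the previous lemma. The strengthening will handle constant variables by matching their values exactly. The bijection $f$ will do that matching.

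\emph{Step 1: construct $f$.} Since $X$ and $Y$ are nonempty, $M,X\models\con(y_i)$ means that $y_i$ takes a single value $c_i^X$ on $X$, and likewise $c_i^Y$ on $Y$. Let $I$ be the set of indices $i$ for which $y_i$ is constant (by hypothesis $I$ is the same for $X$ and $Y$). For $i,j\in I$ we have $c_i^X=c_j^X$ iff $M,X\models y_i=y_j$, iff $M,Y\models y_i=y_j$, iff $c_i^Y=c_j^Y$. Hence $c_i^X\mapsto c_i^Y$ ($i\in I$) is a well-defined injective partial map with finite domain. Such a map extends to a bijection $f\colon M\to M$: the two finite sets $M\setminus\{c^X_i\}$ and $M\setminus\{c^Y_i\}$ differ by finitely many elements, so we can pair off the finitely many leftovers and take the identity elsewhere.

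\emph{Step 2: transport $X$ along $f$.} Put $f(X):=\{f\circ s: s\in X\}$. In the empty signature every bijection of $M$ is an automorphism. A routine induction on $\varphi$ (all clauses refer only to equality, and $\wexists,\wforall$ range over $M$, which $f$ permutes) gives
\[
M,X[\bar a/\bar x]\models\varphi \iff M,f(X)[f(\bar a)/\bar x]\models\varphi .
\]
The team $f(X)$ satisfies the same equality atoms and the same $\con$ atoms as $X$. Moreover, the constant values of $f(X)$ coincide with those of $Y$, and every nonconstant $y_i$ takes at least two values in both $f(X)$ and $Y$.

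\emph{Step 3: strengthened invariance lemma.} Let $X',Y'$ be nonempty teams over $\bar y$ with the following properties: they satisfy the same atoms $y_i=y_j$; they have the same constant variables, with identical values; and every nonconstant variable takes at least two values in each. The claim is that for every tuple $\bar x$ of fresh distinct variables (of any length) and every $\bar a$, we have $M,X'[\bar a/\bar x]\models\varphi$ iff $M,Y'[\bar a/\bar x]\models\varphi$, for every $\varphi$ whose free variables lie in $\bar x\bar y$.

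The proof is by induction on $\varphi$, with the statement quantified over all such $\bar x,\bar a$. The base cases are as follows.
\begin{itemize}
\item The atom $x_i=x_j$ holds iff $a_i=a_j$, in both teams.
\item The atom $y_i=y_j$ holds by hypothesis.
\item For the atom $x_i=y_j$: if $y_j$ is constant it holds iff $a_i=c_j$ in both teams. If $y_j$ is nonconstant it fails in both, since $y_j$ takes at least two values and so some assignment differs from $a_i$.
\end{itemize}
The connectives $\land,\wvee$ are immediate. For $\wnot$ we use that both teams are nonempty. For the quantifiers, $X'[\bar a/\bar x](b/z)=X'[\bar a b/\bar x z]$ after renaming $z$ to be fresh, so the induction hypothesis for the longer tuple applies.

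Applying Step 3 to $X'=f(X)$ and $Y'=Y$, and combining with Step 2, yields the lemma.

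The main obstacle I expect is the bookkeeping in the quantifier step. A quantified variable may coincide with some $y_i$ or $x_j$, which is why the hypothesis must range over arbitrary fresh tuples and renaming must be justified via locality. The extension of the partial map to a bijection in Step 1 is routine by comparison.
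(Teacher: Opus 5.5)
Your proof is correct, but it is organized differently from the paper's. The paper chooses $f$ exactly as you do and then runs a single induction on $\varphi$ with $f$ built into the hypothesis, namely $M,X[\bar a/\bar x]\models\varphi$ iff $M,Y[f(\bar a)/\bar x]\models\varphi$ for all $\bar a$. In that induction, injectivity of $f$ handles $x_i=x_j$, the condition $f(c_i)=d_i$ handles $y_i=x_j$ for constant $y_i$, and surjectivity handles the quantifier step by matching a witness $e$ on the $X$-side with $f(e)$ on the $Y$-side.

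You instead split off the bijection. First you use the generic fact that every permutation of $M$ is an automorphism in the empty signature, and that team satisfaction is invariant under it; this needs no hypotheses on the teams. That replaces $X$ by $f(X)$, which has literally the same constant values as $Y$. Then you prove an identity-version invariance lemma, in which quantifier witnesses are the same element on both sides.

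Your route costs a second induction. In return, it isolates the only role of $f$, which is aligning the constant values. It also makes your Step 3 a genuine generalization of the preceding lemma, which is exactly its case with no constant variables.

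The bookkeeping for variable clashes is the same in both proofs. The paper's quantifier step also silently uses the hypothesis for the extended tuple $\bar x z$, and it relies on the standing convention that bound variables are renamed. If you instead rename inside the induction, note that $\varphi[z'/z]$ is not a subformula, so induct on formula size.

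One small slip in Step 1: $M\setminus\{c^X_i\}$ and $M\setminus\{c^Y_i\}$ are cofinite rather than finite. Your pairing argument (match the finitely many leftovers, identity elsewhere) is nonetheless correct.
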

\begin{proof}

    For every $i$ such that $M,X \models \con(y_i)$ let $c_i$ be the unique element of $X(y_i)$ and $d_i$ be the unique element of $Y(y_i)$. Let $f: M \to M$ be any bijection such that $f(c_i)=d_i$. There are such bijections since $c_i = c_j$ iff $d_i=d_j$.

    We prove that for any \FOTminus-formula $\varphi$ in the empty signature and any $\bar a \in M$ we have 
    \[ M,X[\bar a/\bar x] \models \varphi \iff M,Y[f(\bar a)/\bar x] \models \varphi\]
    by induction over the structure of $\varphi$:
    
    $M,X[\bar a/\bar x] \models y_i=y_j$ iff $M,X \models y_i=y_j$ iff $M,Y \models y_i=y_j$ iff $M,Y[f(\bar a)/\bar x] \models y_i=y_j$.

If $y_i$ is nonconstant, then $M,X[\bar a/\bar x]\not\models y_i=x_j$ and $M,Y
[f(\bar a)/\bar x]\not\models y_i=x_j$. Otherwise,
$M,X[\bar a/\bar x] \models y_i=x_j$ iff $M,X \models \con(y_i)$ and $c_i=a_j$. This holds iff $d_i=f(a_j)$ iff $M,Y[f(\bar a)/\bar x] \models y_i=x_j$.
    
    $M,X[\bar a /\bar x] \models x_i=x_j$ iff $a_i=a_j$ iff $f(a_i)=f(a_j)$ iff $M,Y[f(\bar a)/\bar x] \models x_i=x_j$. 
     
    The cases of $\land, \wvee$ and $\wnot$ are straightforward. 

    $M,X[\bar a/\bar x] \models \wexists z \varphi$ iff there is $e \in M$, $M,X[\bar a/\bar x][e/z] \models \varphi$ iff there is $e \in M$, $M,Y[f(\bar a)/\bar x][f(e)/z] \models \varphi$ iff there is $e' \in M$, 
    $M,Y[f(\bar a)/\bar x][e'/z] \models \varphi$ iff $M,Y[f(\bar a)/\bar x] \models \wexists z \varphi$. 

    The universal-quantifier case is analogous.
\end{proof}

We can now obtain quantifier elimination over every fixed structure. Given a
structure $M$, write
\[
\varphi\equiv_M\psi
\]
if, for every team $X$ over $M$ whose domain contains the free variables of
$\varphi$ and $\psi$,
\[
M,X\models\varphi \iff M,X\models\psi.
\]

\begin{prop}
\label{prop:fixed-model-qe}
Let $M$ be a structure and let $\varphi(\bar x)$ be an
$\FOTminus$-formula in the empty signature. Then there is an $\FOTminus$-formula $\psi(\bar x)$, 
constructed from literals of the forms
\[
x_i=x_j,\qquad \wnot(x_i=x_j),\qquad
\con(x_i),\qquad \wnot\con(x_i)
\]
using $\land$ and $\wvee$, together with $\bot$ if necessary, such that
\[
\varphi\equiv_M\psi.
\]
\end{prop}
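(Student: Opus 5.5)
The plan is to read off from Lemma~\ref{lemma:eq-con-invariance} that, over the fixed structure $M$, truth of $\varphi(\bar x)$ on a nonempty team depends only on its \emph{type}. By the type of a nonempty team $X$ with domain $\{\bar x\}$ we mean the pair consisting of the set $E_X=\{(i,j): M,X\models x_i=x_j\}$ and the set $C_X=\{i : M,X\models\con(x_i)\}$. Since there are only finitely many types, $\psi$ can be taken to be the disjunction of the complete descriptions of the types realised by teams satisfying $\varphi$. By locality (Proposition on locality), we may restrict attention to teams whose domain is exactly the set of free variables $\bar x$ of $\varphi$.

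\textbf{Step 1 (invariance).} Let $X,Y$ be nonempty teams over $\bar x$ of the same type. Apply Lemma~\ref{lemma:eq-con-invariance} with the $y$-variables taken to be $\bar x$ and an empty tuple of supplemented variables. The lemma then says directly that $M,X\models\varphi$ iff $M,Y\models\varphi$. More precisely, we rename so that the team variables play the role of $y_1,\dots,y_n$; the free variables of $\varphi$ are among them, and no $\bar a$ is substituted, so the bijection $f$ plays no role at this level. If $\varphi$ contains bound variables clashing with $\bar x$, we first rename them, as in the paper's convention.

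\textbf{Step 2 (type descriptions).} For a type $t=(E,C)$, put
\[
\delta_t:=\bigwedge_{(i,j)\in E}x_i=x_j\land\bigwedge_{(i,j)\notin E}\wnot(x_i=x_j)\land\bigwedge_{i\in C}\con(x_i)\land\bigwedge_{i\notin C}\wnot\con(x_i).
\]
On nonempty teams, $\wnot\alpha$ holds iff $\alpha$ fails, so for nonempty $X$ we have $M,X\models\delta_t$ iff $X$ has type $t$. Let $T_\varphi$ be the set of types $t$ realised by some nonempty team over $M$ that satisfies $\varphi$. Define $\psi:=\bigvee_{t\in T_\varphi}\delta_t$ (a $\wvee$-disjunction), and $\psi:=\bot$ if $T_\varphi=\emptyset$.

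\textbf{Step 3 (verification).} On the empty team, both $\varphi$ and $\psi$ hold by the empty team property. For a nonempty team $X$ of type $t$: if $X\models\varphi$ then $t\in T_\varphi$, so $X\models\delta_t$ and hence $X\models\psi$. Conversely, if $X\models\psi$ then $X\models\delta_{t'}$ for some $t'\in T_\varphi$, which forces $t'=t$. Some team $Y$ of type $t$ satisfies $\varphi$, so by Step 1 $X\models\varphi$. Finally, teams whose domain properly contains $\bar x$ are handled by locality, since both formulas have free variables among $\bar x$.

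The only point needing care is Step 1: checking that Lemma~\ref{lemma:eq-con-invariance} applies with an empty supplementation tuple. Its hypothesis on $\con$ must be read as quantifying over all of the team's variables, which is exactly the data recorded in the type. If one prefers not to rely on the degenerate case $\bar x=\emptyset$ of the lemma, one can instead rerun its induction directly, with $\bar a$ empty. Everything else is bookkeeping over the finitely many types, of which there are at most $2^{n^2}\cdot 2^n$.
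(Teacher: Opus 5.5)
Your proposal is correct and follows essentially the same route as the paper: finitely many equality--constancy types, $\psi$ taken as the weak disjunction of the complete type descriptions realised by nonempty teams satisfying $\varphi$ (or $\bot$ if there are none), the empty team handled by the empty team property, and Lemma~\ref{lemma:eq-con-invariance} applied with an empty supplementation tuple to transfer $\varphi$ between nonempty teams of the same type. The paper uses the lemma in exactly this degenerate form, which its statement (arbitrary tuples $\bar x$) permits, so your fallback of rerunning the induction is unnecessary; your explicit appeal to locality for teams with larger domains makes explicit a point the paper leaves implicit.
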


\begin{proof}
Let $X$ be a nonempty team over
$\bar x=(x_1,\ldots,x_n)$. Define the complete
\emph{equality--constancy type} of $X$ over $M$ by
\[
\operatorname{tp}_M(X)
:=
\bigwedge_{1\leq i,j\leq n}\ell_{ij}^X
\land
\bigwedge_{1\leq i\leq n}\kappa_i^X,
\]
where
\[
\ell_{ij}^X
:=
\begin{cases}
x_i=x_j,
   &\text{if }M,X\models x_i=x_j,\\
\wnot(x_i=x_j),
   &\text{if }M,X\not\models x_i=x_j,
\end{cases}
\]
and
\[
\kappa_i^X
:=
\begin{cases}
\con(x_i),
   &\text{if }M,X\models\con(x_i),\\
\wnot\con(x_i),
   &\text{if }M,X\not\models\con(x_i).
\end{cases}
\]
Because $X$ is nonempty, exactly one literal is selected in each of these
pairs. In particular,
\[
M,X\models\operatorname{tp}_M(X).
\]

There are only finitely many syntactically possible equality--constancy
types over $\bar x$. Let
\[
\mathcal T_{\varphi,M}
:=
\bigl\{
\operatorname{tp}_M(Y):
Y\neq\emptyset
\text{ and }
M,Y\models\varphi
\bigr\}.
\]
Enumerate the distinct members of this finite set as
$\tau_1,\ldots,\tau_m$. Define
\[
\psi(\bar x):=
\begin{cases}
\bot,
   &\text{if }m=0,\\
\tau_1\wvee\cdots\wvee\tau_m,
   &\text{if }m>0.
\end{cases}
\]

We show that $\varphi\equiv_M\psi$. First let $X=\emptyset$. Then
\[
M,X\models\varphi
\qquad\text{and}\qquad
M,X\models\psi
\]
by the empty-team property.

Now suppose that $X$ is nonempty. If $M,X\models\varphi$, then
$\operatorname{tp}_M(X)\in\mathcal T_{\varphi,M}$. Hence
$M,X\models\psi$.

Conversely, suppose that $M,X\models\psi$. Since $X$ is nonempty,
$M,X\not\models\bot$, so $m>0$ and
\[
M,X\models\tau_j
\]
for some $j\leq m$. By the definition of $\mathcal T_{\varphi,M}$, there is
a nonempty team $Y$ such that
\[
M,Y\models\varphi
\qquad\text{and}\qquad
\tau_j=\operatorname{tp}_M(Y).
\]
Since $X$ is nonempty and satisfies the complete type of $Y$, the teams $X$
and $Y$ agree on all equality and constancy data:
\begin{align*}
M,X\models x_i=x_j
&\quad\Longleftrightarrow\quad
M,Y\models x_i=x_j,\\
M,X\models\con(x_i)
&\quad\Longleftrightarrow\quad
M,Y\models\con(x_i).
\end{align*}
Lemma~\ref{lemma:eq-con-invariance} therefore gives
\[
M,X\models\varphi
\quad\Longleftrightarrow\quad
M,Y\models\varphi.
\]
The right-hand side holds by the choice of $Y$, and hence
$M,X\models\varphi$.
Thus, $\varphi\equiv_M\psi$.
\end{proof}

The formula supplied by Proposition~\ref{prop:fixed-model-qe} may depend on
the structure $M$. Since the size of the domain is fixed, cardinality
conditions need not be recorded explicitly. To obtain a normal form that
works uniformly over all structures, we additionally use the sentences
$\sigma_{\geq n}$ expressing finite lower bounds on $|M|$.

For formulas $\rho_i$, $i\in I$, write
\[
\bigvee_{i\in I}^{\mathrm w}\rho_i
\]
for any fixed finite iteration of the weak disjunction $\wvee$. If
$I=\emptyset$, this expression is understood as $\bot$.

\begin{prop}[Uniform quantifier elimination]
\label{prop:uniform-qe}
Let $\varphi(\bar x)$ be an $\FOTminus$-formula in the empty signature.
Then there is an $\FOTminus$-formula $\psi(\bar x)$ which is a finite
combination, using $\wnot$, $\land$, and $\wvee$, of formulas of the forms
\[
x_i=x_j,\qquad \con(x_i),\qquad \sigma_{\geq n},
\]
where $n\geq1$ and
\[
\sigma_{\geq n}
:=
\wexists z_1\cdots\wexists z_n
\bigwedge_{1\leq i<j\leq n}\wnot(z_i=z_j),
\]
such that 
\[
\varphi(\bar x)\fotequiv\psi(\bar x).
\]
\end{prop}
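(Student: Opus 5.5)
The idea is to make Proposition~\ref{prop:fixed-model-qe} uniform in $M$. The only feature of $M$ that can affect the truth of $\varphi$ at a team with a given equality--constancy type is the cardinality of $M$. Moreover, this cardinality only matters up to a finite threshold determined by $\varphi$.

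\emph{Step 1: a threshold.} Let $N$ be the number of variables of $\varphi$, free and bound together, plus one. I first check that the type $\operatorname{tp}_M(X)$ is realizable by some nonempty team in $M$ only when $M$ is large enough. More importantly, I claim that for nonempty teams $X$ over $M$ and $Y$ over $M'$ with the same equality--constancy type, where either $|M|=|M'|$ or both $|M|,|M'|\geq N$, we have $M,X\models\varphi$ iff $M',Y\models\varphi$. To prove this, I generalize Lemma~\ref{lemma:eq-con-invariance} from a bijection $f\colon M\to M$ to a partial back-and-forth system between $M$ and $M'$. Take the set of finite partial bijections $g$, with $|\operatorname{dom} g|$ plus the number of remaining quantifiers at most $N$, that send the constant values $c_i$ to $d_i$. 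The induction then runs exactly as in the proof of Lemma~\ref{lemma:eq-con-invariance}. The atomic cases use only that $g$ is injective and respects the constants. In the quantifier cases, $\wexists z$ and $\wforall z$ need to extend $g$ to a new element $e$: either $e$ is already in $\operatorname{dom} g$, or we choose a fresh image. A fresh image exists because $|M'|\geq N$ exceeds the number of elements used so far, or because $|M|=|M'|$. Here $X[\bar a/\bar x]$ is compared with $Y[g(\bar a)/\bar x]$ throughout. The nonconstant $y_i$ again never equal any $x_j$ on the whole team, since the team has at least two values of $y_i$.

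\emph{Step 2: assembling $\psi$.} For each $k$ with $1\le k<N$, let $\delta_k:=\sigma_{\geq k}\land\wnot\sigma_{\geq k+1}$, and let $\delta_N:=\sigma_{\geq N}$. These are sentences, so by locality and the empty-team property each of them holds at a nonempty team iff $|M|$ lies in the corresponding range. For each $k\le N$, fix a model $M_k$ of size $k$ (size $N$ for $k=N$), and let $\psi_k$ be the formula given by Proposition~\ref{prop:fixed-model-qe} for $M_k$. Its literals $\wnot(x_i=x_j)$ and $\wnot\con(x_i)$ are allowed combinations, as is $\bot=\wnot\top$. Note that $\top$ can be taken as $\sigma_{\geq1}$, or as $x_1=x_1$. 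Put
\[
\psi:=\bigvee^{\mathrm w}_{1\le k\le N}(\delta_k\land\psi_k).
\]
To verify $\varphi\fotequiv\psi$, take any $M$ and any nonempty $X$. Exactly one $\delta_k$ holds, namely for $k=\min(|M|,N)$. Then $M,X\models\psi$ iff $M,X\models\psi_k$. This holds iff $\operatorname{tp}(X)$ is the type of some nonempty $Y$ over $M_k$ with $M_k,Y\models\varphi$. By Step 1, applied to $M$ and $M_k$, which agree in size or are both of size at least $N$, this is equivalent to $M,X\models\varphi$. The empty team satisfies both sides.

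\emph{Main obstacle.} The delicate part is Step 1, specifically setting up the bookkeeping so that the induction hypothesis is strong enough under quantifiers. I would state it for all pairs $(X[\bar a/\bar x],Y[\bar b/\bar x])$ related by a partial injection $g$ with $g(\bar a)=\bar b$ that respects the constants, and with enough room left (size at least $N$) to absorb the remaining quantifier depth. I would also double-check the infinite case, where a bijection is not available but the partial-isomorphism argument still works.
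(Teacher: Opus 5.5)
Your argument is correct in outline, but it takes a different route from the paper.

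The paper works syntactically. It shows that the Boolean combinations of $x_i=x_j$, $\con(x_i)$, $\sigma_{\geq n}$ are closed under $\wexists y$, and under $\wforall y$ by duality. To do so it passes to disjunctive normal form, distributes $\wexists y$ over $\wvee$, and eliminates $y$ from a conjunction of literals case by case:
\begin{enumerate}
\item $\wnot(y=y)$ or $\wnot\con(y)$ yields $\bot$;
\item a positive $y=x_j$ is handled by adding $\con(x_j)$ and substituting $x_j$ for $y$;
\item a pure conjunction of literals $\wnot(y=x_i)$ becomes a weak disjunction, over complete types $\pi$, of $\alpha_\pi\land\sigma_{\geq c_\pi+1}$, where $c_\pi$ is the number of distinct constant values to be avoided.
\end{enumerate}

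You instead prove a small-model invariance: truth at a nonempty team depends only on the equality--constancy type and on $\min(|M|,N)$. You get this from a back-and-forth extension of Lemma~\ref{lemma:eq-con-invariance} to pairs of models. You then glue together the model-wise normal forms of Proposition~\ref{prop:fixed-model-qe} for finitely many finite representatives $M_k$, using the $\delta_k$.

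What each route buys:
\begin{enumerate}
\item Your route reuses the earlier results, avoids the literal-level case analysis, and gives an explicit threshold. Since each $M_k$ is finite, $\psi_k$ can be computed by evaluating $\varphi$ on all teams of $M_k$, so effectiveness and the decidability corollary follow at once.
\item The paper's route is a direct formula transformation that exhibits exactly which cardinality tests arise, with no model theory across different cardinalities.
\end{enumerate}

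Two details need fixing when you write this up.
\begin{enumerate}
\item Bound variables may be reused, so the quantifier rank can exceed the number of variables. Your invariant ``$|\operatorname{dom} g|$ plus remaining quantifiers at most $N$'' can then fail at the outset. Either define $N$ as the number of free variables plus the quantifier rank plus one, or keep $\operatorname{dom} g$ restricted to the constant values and the current values of the bound variables; with the latter bookkeeping your original $N$ suffices.
\item In Step 2, the direction from $M,X\models\varphi$ to $M,X\models\psi_N$ when $|M|>N$ needs $\operatorname{tp}_M(X)$ to be realized by a nonempty team in $M_N$. This holds because every realizable type over $\bar x$ is realized in any structure with at least $\max(|\bar x|,2)$ elements. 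However, it is the converse of what you state at the start of Step 1, so you need to state and use this direction explicitly.
\end{enumerate}
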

\begin{proof}
Let $\mathcal B(\bar x)$ be the class of all finite combinations using $\wnot$, $\land$, and $\wvee$ of
formulas of the forms
\[
x_i=x_j,\qquad \con(x_i),\qquad \sigma_{\geq n},
\]
whose free variables are among $\bar x$. We prove by induction on formulas
that every $\FOTminus$-formula whose free variables are among $\bar x$ is
equivalent to a formula in $\mathcal B(\bar x)$.

The atomic case is immediate, since the only atoms in the empty signature
are equalities. The induction steps for $\land$, $\wvee$, and $\wnot$ follow
directly from the definition of $\mathcal B$.

It remains to prove closure under the weak quantifiers. Suppose that
\[
\theta(\bar x,y)\in\mathcal B(\bar x,y).
\]
We first eliminate $\wexists y$.

We may assume that $\theta$ is in disjunctive
normal form. Furthermore, weak
existential quantification distributes over finite weak disjunction.
It is thus enough to eliminate $\wexists y$ from a conjunction
$\delta(\bar x,y)$ of literals of the forms
\[
u=v,\quad \wnot(u=v),\quad
\con(u),\quad \wnot\con(u),\quad
\sigma_{\geq n},\quad\wnot\sigma_{\geq n},
\]
where $u,v$ are among $\bar x,y$. In fact, by standard considerations,
we may assume that every literal contains $y$.

The literals $y=y$ and $\con(y)$ are automatically satisfied after choosing
a constant value for $y$. By contrast, $\wnot(y=y)$ and
$\wnot\con(y)$ are false on every nonempty team obtained by such a choice.
Consequently, if either of the latter two literals occurs in $\delta$, then
\[
\wexists y\,\delta\fotequiv\bot.
\]

Suppose next that a positive equality $y=x_j$ occurs in $\delta$. On a
nonempty team, a constant witness for $y$ satisfies $y=x_j$ exactly when
$x_j$ is constant, in which case the witness is forced to be the unique
value of $x_j$. It follows, also on the empty team, that
\[
\wexists y\,\delta
\fotequiv
\con(x_j)\land
\delta[x_j/y].
\]
The right-hand side belongs to $\mathcal B(\bar x)$.

We are left with the case
\[
\delta(\bar x,y)
=
\bigwedge_{i\in N}\wnot(y=x_i)
\]
for some $N\subseteq\{1,\ldots,k\}$, where
$\bar x=(x_1,\ldots,x_k)$. The set $N$ is allowed to be empty.

A complete equality--constancy type is realizable if and only if
its positive equalities define an equivalence relation and,
whenever $x_i=x_j$ belongs to the type, $\con(x_i)$ belongs to
the type if and only if $\con(x_j)$ does.
Every such type in $k$ variables is realizable in every structure
of size at least $\max(k,2)$: assign distinct values to the
constant classes and let the remaining classes vary independently.

Let $\Pi_k$ be the finite set of all complete equality--constancy types over
$\bar x$ which are realized by a nonempty team in some structure. For
$\pi\in\Pi_k$, let $\alpha_\pi(\bar x)$ denote the conjunction defining
$\pi$. Thus every nonempty team over $\bar x$ satisfies exactly one formula
$\alpha_\pi$.

For every $\pi\in\Pi_k$, define an equivalence relation $\sim_\pi$ on
$\{1,\ldots,k\}$ by
\[
i\sim_\pi j
\quad\Longleftrightarrow\quad
x_i=x_j\text{ occurs positively in }\alpha_\pi.
\]
Let
\[
C_\pi
:=
\{i\in N:\con(x_i)\text{ occurs positively in }\alpha_\pi\}
\]
and put
\[
c_\pi:=|C_\pi/{\sim_\pi}|.
\]
If a nonempty team $X$ satisfies $\alpha_\pi$, then $c_\pi$ is precisely the
number of distinct constant values among the variables $x_i$ with $i\in N$.

For such a team, a value $a\in M$ fails as a witness for $y$ exactly when
$a$ is the constant value of one of these variables. Nonconstant variables
$x_i$ exclude no value, because no constant choice of $y$ makes
$y=x_i$ true throughout the team. Hence
\[
M,X\models
\wexists y\bigwedge_{i\in N}\wnot(y=x_i)
\quad\Longleftrightarrow\quad
|M|\geq c_\pi+1.
\]
For every nonempty team $X$,
\[
M,X\models\sigma_{\geq n}
\quad\Longleftrightarrow\quad
|M|\geq n.
\]
We therefore obtain
\[
\wexists y\,\delta \fotequiv
\bigvee_{\pi\in\Pi_k}^{\mathrm w}
\left(
\alpha_\pi(\bar x)\land\sigma_{\geq c_\pi+1}
\right).
\] 

For nonempty teams this follows from the preceding counting argument and the
fact that every such team realizes exactly one complete type. On the empty
team, both sides  hold by the empty-team property. This is clearly in $\mathcal B(\bar x)$.
This proves that $\mathcal B$ is
closed under $\wexists$.

Finally,
\[
\wforall y\,\theta
\fotequiv
\wnot\wexists y\,\wnot\theta.
\]
Thus $\mathcal B$ is also closed under $\wforall$.

It follows by induction that every $\FOTminus$-formula in the empty
signature is equivalent to a formula of the required form.
\end{proof}

It follows from these results that $\FOTminus$ is strictly weaker than
$\FOT$ at the level of open formulas. Recall that the inclusion atom 
$\bar x\subseteq \bar y$ is definable in $\FOT$. In contrast, it is not expressible
in $\FOTminus$.

\begin{prop}
There is no $\FOTminus$-formula $\varphi(x,y)$ in the empty signature such
that, for every model $M$ and every team $X$ over $\{x,y\}$,
$$
M,X\models\varphi(x,y) \iff M,X\models x\subseteq y.
$$
\end{prop}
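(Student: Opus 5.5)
We argue by contradiction, using Lemma~\ref{lemma:eq-con-invariance} (or equivalently Proposition~\ref{prop:fixed-model-qe}). Suppose $\varphi(x,y)$ is an $\FOTminus$-formula in the empty signature defining $x\subseteq y$. The plan is to find a model $M$ and two nonempty teams $X$ and $Y$ over $\{x,y\}$ with the following three properties:
\begin{enumerate}[(i)]
\item they agree on all literals $x=y$, $x=x$, $y=y$, $\con(x)$ and $\con(y)$;
\item $M,X\models x\subseteq y$;
\item $M,Y\not\models x\subseteq y$.
\end{enumerate}
Applying Lemma~\ref{lemma:eq-con-invariance} with empty $\bar x$ (so the bijection $f$ plays no role) then gives $M,X\models\varphi$ if and only if $M,Y\models\varphi$. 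This contradicts (ii) and (iii).

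For the construction, take $M=\{0,1,2\}$. Let $X=\{(x,y)\mapsto(0,1),\ (x,y)\mapsto(1,0)\}$, and let $Y=\{(x,y)\mapsto(0,1),\ (x,y)\mapsto(2,0)\}$. We then check the required properties.
\begin{itemize}
\item In both teams, $x=y$ fails on some assignment, so $\wnot(x=y)$ holds in both.
\item $x=x$ and $y=y$ hold trivially in both.
\item Neither $x$ nor $y$ is constant in either team: $X(x)=\{0,1\}$, $X(y)=\{1,0\}$, $Y(x)=\{0,2\}$ and $Y(y)=\{1,0\}$. Hence $\wnot\con(x)$ and $\wnot\con(y)$ hold in both.
\item $X(x)=\{0,1\}\subseteq X(y)=\{0,1\}$, so $X$ satisfies $x\subseteq y$.
\item $2\in Y(x)\setminus Y(y)$, so $Y$ does not satisfy $x\subseteq y$.
\end{itemize}

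The only point needing care is that the hypotheses of Lemma~\ref{lemma:eq-con-invariance} are stated for teams with domain $\{y_1,\ldots,y_n\}$ and formulas evaluated at $X[\bar a/\bar x]$. We simply instantiate its $y$-variables as our $x,y$ and take the tuple $\bar x$ there to be empty. If one prefers to avoid this degenerate instantiation, the same conclusion follows from Proposition~\ref{prop:fixed-model-qe} over this fixed $M$. That proposition gives $\varphi\equiv_M\psi$ for some $\psi$ built from the literals $x=y$, $\wnot(x=y)$, $\con(\cdot)$ and $\wnot\con(\cdot)$ (plus trivial ones) using $\land$ and $\wvee$. The truth of such a $\psi$ on a nonempty team depends only on these literals, and $X$ and $Y$ agree on all of them.

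I expect no real obstacle. The main thing is choosing the teams so that they are not distinguished by constancy or by equality, which is why each team has two assignments with both columns nonconstant.
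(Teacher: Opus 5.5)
Your proof is correct and follows the paper's route: fix $M=\{0,1,2\}$, exhibit two nonempty teams with the same equality--constancy type, one satisfying $x\subseteq y$ and one not, and conclude with Lemma~\ref{lemma:eq-con-invariance} (instantiating it with empty $\bar x$ is legitimate) or Proposition~\ref{prop:fixed-model-qe}. The only difference is the choice of witnesses: the paper uses $X(x,y)=\{0,1\}\times\{0,1,2\}$ versus $Y(x,y)=\{0,1\}\times\{0,2\}$, and your two-assignment teams work equally well.
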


\begin{proof}
Let $M=\{0,1,2\}$. Consider the two teams $X$ and $Y$ over
$\{x,y\}$ determined by
\begin{gather*}
X(x,y)=\{0,1\} \times \{0,1,2\}, \text{ and } \\
Y(x,y)=\{0,1\} \times \{0,2\}.
\end{gather*}
The teams $X$ and $Y$ have the same equality--constancy type. Hence, by the previous result, $X$ and $Y$ satisfy
exactly the same $\FOTminus$-formulas in the empty signature.

On the other hand,
\begin{gather*}
M,X\models x\subseteq y, \text{ but }\\
M,Y\not\models x\subseteq y.
\end{gather*}
Consequently no $\FOTminus$-formula can define the inclusion atom.
\end{proof}

As a consequence of the quantifier-elimination theorem, we obtain the
following decidability result.

\begin{cor}
For $\FOTminus$ in the empty signature, the following problems are
decidable:
\begin{enumerate}
    \item whether a formula is satisfiable by some nonempty team;
    \item whether a formula is valid, i.e.\ satisfied by every team over every
    model; and
    \item whether two formulas are equivalent.
\end{enumerate}
\end{cor}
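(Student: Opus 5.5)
The plan is to make the proof of Proposition~\ref{prop:uniform-qe} effective and then decide the resulting quantifier-free normal forms by a finite search. First I will check that every step in that proof is algorithmic. Converting $\theta$ to weak disjunctive normal form is effective. Each literal-elimination rule is syntactic: discard the conjunction if $\wnot(y=y)$ or $\wnot\con(y)$ occurs, substitute along a positive $y=x_j$, and otherwise replace the conjunction by the explicit disjunction over $\Pi_k$ with the sentences $\sigma_{\geq c_\pi+1}$. The finite set $\Pi_k$ is computable from its realizability characterization. The clause $\wforall y\,\theta\fotequiv\wnot\wexists y\,\wnot\theta$ is a rewriting. Hence there is an algorithm taking $\varphi(\bar x)$ to an equivalent $\psi(\bar x)\in\mathcal B(\bar x)$.

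Second, I will evaluate such a $\psi$ on nonempty teams. On a nonempty team $X$ over a model $M$, the truth of each basic formula is determined by two data. The formulas $x_i=x_j$ and $\con(x_i)$ are determined by the complete type $\pi\in\Pi_k$ that $X$ realizes. The formula $\sigma_{\geq n}$ is determined by whether $|M|\geq n$. The connectives $\wnot,\land,\wvee$ then act as Boolean operations, because $X\neq\emptyset$ (here $\wnot$ is genuine negation). So the truth of $\psi$ on $X$ is a computable Boolean function of $\pi$ and of $|M|$, where $|M|$ matters only through the thresholds $n\le N$, with $N$ the largest index occurring in $\psi$.

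Third, I will show that only finitely many cases need checking. The main obstacle is knowing which pairs (type, cardinality) actually occur. A type $\pi$ is realizable in $M$ exactly when $|M|$ is at least the number of $\sim_\pi$-classes marked constant, plus $2$ if some class is nonconstant (and $|M|\geq1$ in general). This bound is computable. The earlier claim about $\max(k,2)$ gives a uniform sufficient bound, but I will use the exact count so that small models are handled correctly. It therefore suffices to examine the cardinalities $1,\dots,\max(N,k,2)+1$, with the last value representing all larger models, and every type $\pi$ realizable at each cardinality.

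This yields the three decision procedures. Satisfiability by a nonempty team holds iff some realizable pair makes $\psi$ true. Validity holds iff every realizable pair makes $\psi$ true, since the empty team satisfies $\psi$ by the empty-team property. Equivalence of $\varphi$ and $\varphi'$ reduces to the validity of $(\varphi\wimp\varphi')\land(\varphi'\wimp\varphi)$. On nonempty teams this formula is the Boolean biconditional, and on the empty team both formulas hold anyway.
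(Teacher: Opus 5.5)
Your overall route matches the paper's. You make the quantifier elimination effective and note that on a nonempty team the normal form is a Boolean function of the equality--constancy type and of the thresholds $|M|\geq n$. You then check finitely many (type, cardinality) cases. Your reduction of equivalence to validity of $(\varphi\wimp\varphi')\land(\varphi'\wimp\varphi)$ is also correct.

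The gap is in the step you single out as the main obstacle: your ``exact count'' for realizability is wrong. Suppose $\pi$ has $m$ constant classes and at least one nonconstant class. Then $\pi$ is realizable in $M$ iff $|M|\geq\max(m,2)$, not iff $|M|\geq m+2$. When all classes are constant, the condition is $|M|\geq m$. Nonconstant variables may reuse the constant values. The only requirements are these:
\begin{enumerate}
\item distinct constant classes get distinct values;
\item each nonconstant class takes at least two values;
\item distinct nonconstant classes differ on some assignment.
\end{enumerate}
Two elements $a\neq b$ suffice for the last two requirements.

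Here is a concrete counterexample with $k=2$. Take the type determined by $\con(x_1)$, $\wnot\con(x_2)$ and $\wnot(x_1=x_2)$. It is realized in the two-element model $\{0,1\}$ by the team consisting of the assignments $x_1\mapsto0,x_2\mapsto0$ and $x_1\mapsto0,x_2\mapsto1$. Your criterion, however, demands $|M|\geq3$. Consequently your procedure declares $\con(x_1)\land\wnot\con(x_2)\land\wnot\sigma_{\geq3}$ unsatisfiable by nonempty teams, although this team satisfies it. Dually, it declares the weak negation of that formula valid. So the algorithm as stated gives wrong answers. The extra $+1$ in your top cardinality $\max(N,k,2)+1$ is needed only because of this overcount.

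The fix is easy. You can use the correct threshold. Alternatively, do what the paper's phrase ``the finitely many types realizable at each size'' implicitly allows. For each size $j\leq\max(N,k,2)$, compute the realizable types by brute force: enumerate the finitely many nonempty teams over $\bar x$ in the $j$-element structure, which is unique up to isomorphism in the empty signature. The uniform bound $\max(k,2)$ then handles all larger sizes. With that correction your argument coincides with the paper's.
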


\begin{proof}
The proof of the quantifier-elimination theorem is effective. Hence, given an
$\FOTminus$-formula $\varphi(\bar x)$, we can effectively construct an
equivalent Boolean combination of formulas of the form
$$
x_i=x_j,\qquad \con(x_i),\qquad \sigma_{\geq n}.
$$

Let $k=|\bar x|$, and let $N$ be the largest cardinality threshold
occurring in the normal form, taking $N=1$ if none occurs.
Put $K=\max(N,k,2)$. By the realizability criterion above, all
structures of size at least $K$ realize exactly the same
equality--constancy types. They also agree on all cardinality tests
in the normal form. It therefore suffices to consider domain sizes
$1,\ldots,K$, with $K$ representing all larger sizes, and the
finitely many types realizable at each size. Evaluating the normal
form on these cases decides satisfiability by a nonempty team and
validity.

For equivalence, given formulas $\varphi$ and $\psi$, compute their
normal forms and compare their truth values on all realizable
equality--constancy types and all relevant cardinality cases. Since there are
only finitely many such cases, this is decidable as well. The empty team
requires no separate test, since every $\FOTminus$-formula is satisfied by
the empty team.
\end{proof}

Thus the loss of disequality has a substantial effect on expressive power.
While $\FOT$ can compare the sets of values assumed by variables through
inclusion, $\FOTminus$ cannot make even this basic comparison. Its formulas
in the empty signature are limited to the equality and constancy information
described by the quantifier-elimination theorem, together with cardinality
properties of the underlying model. The full logic $\FOT$ is
strictly stronger, although still rather weak in expressive terms. As we
shall see in the next section, however, adding intuitionistic implication
changes the situation dramatically and yields a logic of much greater
expressive power.

\section{Adding intuitionistic implication}

We now consider the extension of FOT obtained by adding intuitionistic
implication. We denote this logic by $\FOTimp$. Thus, in addition
to the formation rules for FOT, we allow formulas
\[
    \varphi \to \psi.
\]
The satisfaction clause is the usual intuitionistic implication clause from team semantics:
\[
    M,X\models \varphi\to\psi
    \iff
    \text{for every }Y\subseteq X,\text{ if }M,Y\models\varphi,\text{ then }M,Y\models\psi.
\]

We shall use the following abbreviations. Let
\[
    \top := \wforall z( z=z)
\]
and
\[
    \bot := \wnot\top.
\]
Thus $\top$ is true on every team, and $\bot$ is true exactly on the empty team.

The connective $\to$ makes it possible to quantify over subteams. For instance,
\[
    \Box\varphi := \top\to\varphi
\]
satisfies
\[
    M,X\models \Box\varphi
    \iff
    M,Y\models\varphi\text{ for every }Y\subseteq X.
\]

It is worth observing that, once intuitionistic implication is added to
$\FOTminus$, primitive disequality becomes definable. Thus, $\FOTminus
(\to)$ has the same strength as $\FOTimp$.

\begin{prop}
In $\FOTminus(\to)$, primitive tuple disequality $\bar x\neq\bar y
$ is definable by
$$
\delta(\bar x,\bar y):=\left(\bigwedge_i x_i=y_i\right)\to\bot.
$$
\end{prop}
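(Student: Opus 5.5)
We need to show that for every model $M$ and team $X$, $M,X\models\delta(\bar x,\bar y)$ if and only if $s(\bar x)\neq s(\bar y)$ for all $s\in X$. The plan is to unfold the satisfaction clause for $\to$ and reduce everything to singleton subteams. We use two facts established earlier: $\bot$ is satisfied exactly by the empty team, and a conjunction of first-order atoms $\bigwedge_i x_i=y_i$ is satisfied by a team iff every assignment in it satisfies each equality, i.e.\ iff $s(\bar x)=s(\bar y)$ for every $s$ in the team. By the clause for $\to$, $M,X\models\delta$ holds iff every subteam $Y\subseteq X$ with $M,Y\models\bigwedge_i x_i=y_i$ is empty. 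In other words, $X$ has no nonempty subteam on which $\bar x$ and $\bar y$ agree pointwise.

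For the direction from right to left, assume $s(\bar x)\neq s(\bar y)$ for all $s\in X$. Let $Y\subseteq X$ satisfy $\bigwedge_i x_i=y_i$. Then every $s\in Y$ has $s(\bar x)=s(\bar y)$. Such an $s$ also lies in $X$, which contradicts the assumption, so $Y=\emptyset$ and $M,Y\models\bot$.

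For the direction from left to right, argue contrapositively. Suppose some $s\in X$ has $s(\bar x)=s(\bar y)$. Take $Y=\{s\}$. This singleton satisfies every $x_i=y_i$ but is nonempty, so it fails $\bot$. Hence $M,X\not\models\delta$. Finally, the empty team satisfies both sides trivially, which is consistent with the empty team property.

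There is no genuine obstacle here. The only point requiring care is that $\to$ quantifies over all subteams, and singleton subteams are enough to detect a pointwise failure. This is exactly what downward closure of the antecedent $\bigwedge_i x_i=y_i$ makes available, since first-order atoms are flat. The equivalence holds in every signature, so it also shows that $\FOTminus(\to)$ and $\FOTimp$ have the same expressive power: replace every primitive disequality atom by $\delta$ and argue by a trivial induction on formulas.
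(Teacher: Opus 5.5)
Your proof is correct and follows essentially the same route as the paper's. Both unfold the clause for $\to$. In one direction you use the singleton subteam $\{s\}$ to witness the failure when some $s(\bar x)=s(\bar y)$. In the other you observe that any subteam satisfying $\bigwedge_i x_i=y_i$ must be empty when the disequality holds pointwise.
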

\begin{proof}
Suppose first that
$$
M,X\models\delta(\bar x,\bar y)
$$
for some model $M$ and team $X$.
If there were some $s\in X$ such that
$$
s(\bar x)=s(\bar y),
$$
then the singleton team $\{s\}$ would satisfy
$$
\bigwedge_i x_i=y_i
$$
but would not satisfy $\bot$. Since $\{s\}\subseteq X$, this shows that 
$$
M,X\models\bar x\neq\bar y.
$$

Conversely, suppose that
$$
M,X\models\bar x\neq\bar y.
$$
Let $Y\subseteq X$ and assume
$$
M,Y\models\bigwedge_i x_i=y_i.
$$
If $Y$ were nonempty, this would contradict 
$M,X\models\bar x\neq\bar y$. Hence $Y=\emptyset$, and therefore
$M,Y\models\bot$.
\end{proof}

Let us state some easy observations. For this we extend the $\cdot^\sharp$ translation to $\FOTimp$ in the obvious way: 
\[
    (\varphi \to \psi)^\sharp \text{ is } \varphi^\sharp \to \psi^\sharp.
\]

\begin{prop}\label{prop4.2}
\begin{enumerate}
    \item  $\FOTimp$ has the empty team property. That is, for every
        model $M$ and every $\FOTimp$-formula $\varphi$, $M,\emptyset\models\varphi$.
    \item $\FOTimp$ is local. That is, for every model $M$, every
      team $X$, and every $\FOTimp$-formula $\varphi$, 
      $M,X\models\varphi  \text{ iff }  M,X\mathop\upharpoonright\FV(\varphi)\models\varphi$.
    \item  For every model $M$, every assignment $s$, and every $\mathrm
        {FOT}(\to)$-formula $\varphi$, $M,s\folmodels \varphi^\sharp \text{ iff } M,\{s\}\models\varphi$.
        \item\label{aaaa} For every $\FOTimp$-sentence $\sigma$ and every model $M$, 
        $M\folmodels \sigma^\sharp  \text{ iff }  M\models\sigma$.
\item\label{cccc} For every $\FOTimp$-formula $\varphi$ there exists a sentence $\varphi^*$ of second-order logic such that for all $M$ and teams $X$ over $\FV(\varphi)=\bar x$,
$$M,X\models \varphi \iff (M,\rel(X))\models \varphi^*,$$
 where $\rel(X)=\{s(\bar x )\,:\, s\in X \}$       \end{enumerate}
\end{prop}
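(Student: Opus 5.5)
All five items will be proved by induction on the structure of $\varphi$, reusing the corresponding facts for \FOT\ and adding a case for the new connective $\to$.

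For (1), the \FOT\ clauses are handled exactly as before. For $\varphi\to\psi$ on the empty team, the only subteam is $\emptyset$, and $\psi$ holds there by the induction hypothesis.

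For (2), we strengthen the claim to: for all $V\supseteq\FV(\varphi)$, $M,X\models\varphi$ iff $M,X\mathop\upharpoonright V\models\varphi$. This version handles quantifiers, since $X(a/x)\mathop\upharpoonright (V\cup\{x\})=(X\mathop\upharpoonright V)(a/x)$. The case of $\to$ is the main obstacle, because subteams of $X$ and subteams of $X\mathop\upharpoonright V$ do not correspond bijectively. We use the map $Y\mapsto Y\mathop\upharpoonright V$, which sends subteams of $X$ onto subteams of $X\mathop\upharpoonright V$. For the converse direction we use preimages: for $Z\subseteq X\mathop\upharpoonright V$, put $Y=\{s\in X: s\mathop\upharpoonright V\in Z\}$, so that $Y\mathop\upharpoonright V=Z$.
\begin{itemize}
\item Suppose $X\models\varphi\to\psi$ and let $Z\subseteq X\mathop\upharpoonright V$ satisfy $\varphi$. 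Take the preimage $Y$. By the induction hypothesis $Y\models\varphi$, hence $Y\models\psi$, and therefore $Z\models\psi$.
\item Suppose $X\mathop\upharpoonright V\models\varphi\to\psi$ and let $Y\subseteq X$ satisfy $\varphi$. Then $Y\mathop\upharpoonright V\subseteq X\mathop\upharpoonright V$ satisfies $\varphi$ by the induction hypothesis, hence satisfies $\psi$, and therefore $Y\models\psi$.
\end{itemize}

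For (3), on a singleton team $\{s\}$ the subteams are $\emptyset$ and $\{s\}$. Since $\emptyset$ satisfies everything by (1), $\{s\}\models\varphi\to\psi$ iff ($\{s\}\models\varphi$ implies $\{s\}\models\psi$). By the induction hypothesis this is the classical implication clause. Quantifiers preserve singleton teams, since $\{s\}(a/x)=\{s(a/x)\}$, and the remaining connectives are as in the \FOT\ proposition. Item (4) is then the special case $s=\emptyset$.

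For (5), we translate inductively into a second-order formula $\varphi^*(R)$ with $R$ interpreted as $\rel(X)$. It is convenient to let $\varphi^*$ depend on a fixed variable tuple $\bar x\supseteq\FV(\varphi)$; by (2) this causes no loss.
\begin{itemize}
\item Atoms: $\forall\bar x(R\bar x\to\lambda)$, and similarly for $\neq$.
\item $\wnot\varphi$: $\neg\exists\bar x R\bar x\vee\neg\varphi^*$.
\item $\land$ and $\wvee$: the corresponding classical connectives.
\item $\wexists y\varphi$: $\exists a\,\varphi^*(R')$, where $R'\bar x y$ is defined as $R\bar x\wedge y=a$; we substitute this first-order definition for $R'$ in $\varphi^*$. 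Here $y$ is a bound variable renamed apart from $\bar x$.
\item $\wforall y\varphi$: the same with $\forall a$.
\item $\varphi\to\psi$: $\forall S\bigl(\forall\bar x(S\bar x\to R\bar x)\wedge\varphi^*(S)\to\psi^*(S)\bigr)$, which is correct because subteams of $X$ correspond to subrelations of $\rel(X)$ on the fixed variables, using (2).
\end{itemize}
A routine induction shows correctness of these clauses.
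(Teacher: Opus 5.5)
Your proposal is correct and follows the same route as the paper. Like the paper, you argue by structural induction throughout: locality for $\to$ is handled by lifting a subteam of the projection to its preimage, the singleton case uses that $\{s\}$ has only $\emptyset$ and $\{s\}$ as subteams, item (4) comes from evaluating on $\{\emptyset\}$, and item (5) uses a translation in which $\to$ becomes universal quantification over subrelations of $R$. The only differences are that you strengthen locality to arbitrary $V\supseteq\FV(\varphi)$ and write out the second-order translation clauses explicitly, whereas the paper simply appeals to the standard translation from the literature.
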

\begin{proof}
The empty team property, locality, and singleton correspondence
follow by structural induction. For locality of implication,
a subteam of the projection is lifted to its preimage in the
original team. On a singleton team, the only subteams are the
singleton itself and the empty team, so implication has its
classical truth condition. The sentence claim follows by
evaluating on $\{\emptyset\}$.

The last item follows from the standard effective translation
into second-order logic; see \cite{KontinenN11}. In particular,
the implication clause quantifies universally over subrelations
of the relation representing the team.
\end{proof}

Thus, intuitionistic implication does not add expressive power at the level of
sentences, when formulas are evaluated on singleton teams. Its effect is
instead visible at the level of open formulas and arbitrary teams.

It is also immediate that, in $\FOTimp$, we can define functional dependence
$\dep(x_0,\ldots, x_k,y)$ as
$$\con(x_0) \land \ldots \land \con(x_k) \to \con(y).$$

Below we say that  a function symbol occurs \emph{normally} in a formula $\theta$ if all of its occurrences are of the form $f_i(\bar{z_i})$ with a unique tuple $\bar{z_i}$  of pairwise distinct variables.

\begin{lemma}[Flat translation of quantifier-free formulas]\label{lemma:flat}
Let $\theta(\bar x)$ be a quantifier-free first-order formula in which $f_1,\ldots, f_n$ occur normally. Then there is an
$\FOTimp$-formula $\theta^\flat(\bar x\bar y)$, where $\bar y =y_1,\ldots, y_n$ are fresh variables such that, for every model $M$ and every team
$X$ over $\bar x$,
\[
M,X^*\models \theta^\flat \iff (M,\bar f),s\models \theta \text{ for every }s\in X,
\]
where $X^*=X[\bar F/\bar y]$ and $F_i(s)=f_i(s(\bar z_i))$.
\end{lemma}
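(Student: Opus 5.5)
The plan is to eliminate the function symbols pointwise first, and then express ``every assignment satisfies a quantifier-free formula'' flatly using $\to$ and $\bot$.

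First, let $\theta'(\bar x\bar y)$ be the result of replacing every occurrence of $f_i(\bar z_i)$ in $\theta$ by $y_i$. This is well defined precisely because the $f_i$ occur normally: each $f_i$ is applied to a single fixed tuple $\bar z_i$ of variables, so no term is nested inside another. For $s\in X$ put $s^*=s(\bar F(s)/\bar y)$. A routine induction on the quantifier-free formula $\theta$ gives
\[
(M,\bar f),s\folmodels\theta \iff M,s^*\folmodels\theta'.
\]
The point is that $s^*(y_i)=f_i(s(\bar z_i))$ is exactly the value of the replaced term. Moreover $X^*=\{s^*:s\in X\}$, so it suffices to find an $\FOTimp$-formula $\theta^\flat$ with
\[
M,Y\models\theta^\flat \iff M,t\folmodels\theta' \text{ for every } t\in Y,
\]
for all teams $Y$ over $\bar x\bar y$.

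Next, put $\theta'$ into conjunctive normal form, $\theta'\equiv\bigwedge_k C_k$, where each clause is $C_k=l_{k,1}\vee\cdots\vee l_{k,m_k}$ and the $l$'s are literals. Call a formula \emph{flat} if it holds on $Y$ iff it holds on every singleton $\{t\}$ with $t\in Y$. I would use three facts.
\begin{enumerate}
\item First-order atoms $\lambda$ are flat and agree with $\lambda$ on singletons.
\item For any flat $\chi$, the formula $\chi\to\bot$ is flat and holds on $Y$ iff no $t\in Y$ has $M,\{t\}\models\chi$. This is the argument already used for $\delta(\bar x,\bar y)$: a single point satisfying $\chi$ gives a nonempty subteam satisfying $\chi$ but not $\bot$. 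Conversely, any nonempty subteam satisfying $\chi$ contains such a point, by flatness.
\item $\land$ preserves flatness.
\end{enumerate}
For a literal $l$, let $\hat l$ be $\lambda$ if $l=\lambda$, and $\lambda\to\bot$ if $l=\neg\lambda$. By the facts above, $\hat l$ is flat and holds at a point exactly when $l$ does. Let $\bar l$ be the complementary literal. Define
\[
C_k^\flat:=\bigl(\widehat{\bar l_{k,1}}\land\cdots\land\widehat{\bar l_{k,m_k}}\bigr)\to\bot,
\qquad
\theta^\flat:=\bigwedge_k C_k^\flat .
\]
An empty clause is read as $\top\to\bot$, and an empty conjunction as $\top$. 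By fact 2, $C_k^\flat$ holds on $Y$ iff no $t\in Y$ falsifies every literal of $C_k$, that is, iff every $t\in Y$ satisfies $C_k$. Conjoining over $k$ gives the required equivalence.

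I expect the main obstacle to be the disjunction inside clauses. The connective $\wvee$ is not pointwise, since it chooses one disjunct for the whole team, and no split (tensor) disjunction is available. The De Morgan step above is what circumvents this: it turns a pointwise disjunction into ``no subteam satisfies the conjunction of the negations'', using $\to$ and $\bot$. The remaining verifications are routine: the induction for $\theta'$ and the flatness facts.
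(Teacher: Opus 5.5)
Your proof is correct, but it takes a different route from the paper's. The paper defines $\theta^\flat$ by direct recursion on $\theta$, with no normal form. Atoms are translated by substituting $y_i$ for $f_i(\bar z_i)$, negation by $(\neg\alpha)^\flat:=\alpha^\flat\to\bot$, and disjunction by the intuitionistic-style encoding $(\alpha\lor\beta)^\flat:=(\alpha^\flat\to\beta^\flat)\to\beta^\flat$. It then checks each clause by induction, working only with teams of the form $X^*$ and using that their subteams are of the form $Y^*$; the disjunction case needs the empty team property to see that a witnessing subteam of $\{s^*\}$ must be $\{s^*\}$ itself.

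You instead separate two concerns. You first eliminate the function terms pointwise, which reduces the task to a function-free $\theta'$ and arbitrary teams over $\bar x\bar y$. You then avoid a separate disjunction clause entirely through CNF and De Morgan, so only $\land$ and $\chi\to\bot$ are needed, and everything is checked through the single notion of flatness. This gives a slightly stronger and cleaner statement: the translation is flat on every team, not only on teams of the form $X^*$, and the verification is uniform.

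The cost is the detour through CNF, which is actually unnecessary. Your facts 1--3 equally justify the compositional clause $(\alpha\lor\beta)^\flat:=((\alpha^\flat\to\bot)\land(\beta^\flat\to\bot))\to\bot$. Both arguments ultimately rest on the same observation: flat formulas are closed under $\land$ and $\to$, and $\bot$ is flat.
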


\begin{proof}
We define $\theta^\flat$ by induction on $\theta$.
If $\theta$ is atomic, let $\theta^\flat:=\theta[\bar y / \overline{f(\bar  z)}]$. Let
\begin{align*}
(\alpha\land\beta)^\flat &:= \alpha^\flat\land\beta^\flat,\\ 
(\neg\alpha)^\flat &:= \alpha^\flat \to \bot,\\ 
(\alpha\lor\beta)^\flat &:= (\alpha^\flat\to\beta^\flat)\to\beta^\flat.
\end{align*}

For $s\in X$, write $s^*$ for its extension satisfying
$s^*(y_i)=f_i(s(\bar z_i))$. Every subteam of $X^*$ is of the
form $Y^*=\{s^*:s\in Y\}$ for some $Y\subseteq X$.

We prove the lemma by induction on $\theta$.
The atomic case follows by substitution and the case of conjunction
is immediate. For negation,
\[
M,X^*\models\alpha^\flat\to\bot
\]
iff every subteam $Y^*\subseteq X^*$ satisfying $\alpha^\flat$
is empty. By the induction hypothesis, this is equivalent to saying
that no singleton $\{s^*\}\subseteq X^*$ satisfies $\alpha^\flat$,
which is equivalent to
\[
(M,\bar f),s\not\models\alpha
\quad\text{for every }s\in X,
\]
and hence to
\[
(M,\bar f),s\models\neg\alpha
\quad\text{for every }s\in X.
\]

For disjunction, suppose first that every $s\in X$ satisfies
$\alpha\lor\beta$ in $(M,\bar f)$. We show that
$M,X^*\models(\alpha\lor\beta)^\flat$.
Let $Y\subseteq X$ and assume
\[
M,Y^*\models\alpha^\flat\to\beta^\flat.
\]
We show $M,Y^*\models\beta^\flat$. Let $s\in Y$.
If $(M,\bar f),s\models\beta$, then
$M,\{s^*\}\models\beta^\flat$ by the induction hypothesis.
Otherwise, $(M,\bar f),s\models\alpha$, so
$M,\{s^*\}\models\alpha^\flat$, and the assumed implication
again gives $M,\{s^*\}\models\beta^\flat$.
Thus every $s\in Y$ satisfies $\beta$ in $(M,\bar f)$, and hence
$M,Y^*\models\beta^\flat$ by the induction hypothesis.

Conversely, suppose
$M,X^*\models(\alpha\lor\beta)^\flat$.
Let $s\in X$. We show that
$(M,\bar f),s\models\alpha\lor\beta$.
If $(M,\bar f),s\models\beta$, this is clear.
Otherwise, $M,\{s^*\}\not\models\beta^\flat$ by the induction
hypothesis. Since
$M,X^*\models(\alpha^\flat\to\beta^\flat)\to\beta^\flat$,
it follows that
\[
M,\{s^*\}\not\models\alpha^\flat\to\beta^\flat.
\]
Thus there is a subteam $Z\subseteq\{s^*\}$ such that
$M,Z\models\alpha^\flat$ and $M,Z\not\models\beta^\flat$.
By the empty team property, this subteam must be $Z=\{s^*\}$.
Hence $M,\{s^*\}\models\alpha^\flat$, so
$(M,\bar f),s\models\alpha$ by the induction hypothesis.
Therefore $(M,\bar f),s\models\alpha\lor\beta$.

This completes the induction.
\end{proof}

\begin{thm}\label{thm:sol}
    Let $\Phi$ be a second-order sentence in the signature $\tau$. Then there is an open $\FOTimp$-formula
    $\Phi^\circ(\bar r)$ such that, for every model $M$,
    \[
        M\models\Phi
        \iff
        M,M^{\bar r}\models\Phi^\circ(\bar r),
    \]
    where $M^{\bar r}$ denotes the full team over $\bar r$.
\end{thm}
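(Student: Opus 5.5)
The plan is to reduce $\Phi$ to a normal form in which all second-order quantifiers range over functions and the first-order part is universal and quantifier-free. Then each function quantifier is simulated by quantification over subteams of the full team, using $\to$ for the universal case and $\wnot(\cdot\to\wnot\cdot)$ for the existential case. The innermost matrix is handled by the flat translation of Lemma~\ref{lemma:flat}.

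\emph{Step 1: normal form.} Using standard second-order manipulations, I will rewrite $\Phi$ as an equivalent sentence
\[
Q_1 f_1\cdots Q_n f_n\,\forall \bar x\,\theta(\bar x),
\]
with $Q_i\in\{\exists,\forall\}$ and $\theta$ quantifier-free, in which every $f_i$ occurs normally as $f_i(\bar z_i)$ with $\bar z_i$ a fixed tuple of distinct variables from $\bar x$. This is done as follows.
\begin{itemize}
\item Replace each relation quantifier by a quantifier over a function, recovering the relation by comparing with a constant. The constant itself can be a further existentially quantified 0-ary function. Alternatively, use $\exists u\exists v$ with $u\neq v$, modulo the trivial one-element case, which is handled separately.
\item Replace first-order existentials by Skolem functions.
\item Make all argument tuples of each function unique. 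Do this by introducing fresh copies $f'$ of $f$ applied to fresh variable tuples, and adding to $\theta$ universal clauses $\bar z=\bar z'\to f(\bar z)=f'(\bar z')$.
\item Nested terms are flattened in the usual way with extra universal variables.
\end{itemize}
This step is routine but fiddly.

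\emph{Step 2: coding functions by subteams.} Take $\bar r=\bar x\bar y$ with $\bar y=y_1,\ldots,y_n$. For each $i\le n$, consider the first-order property $\gamma_i(R)$ of a relation $R\subseteq M^{|\bar x\bar y|}$:
\begin{itemize}
\item for every $\bar a$ and every $b_{i+1},\ldots,b_n$ there are $b_1,\ldots,b_i$ with $(\bar a,\bar b)\in R$ (totality, with the later coordinates left free);
\item $R$ satisfies the functional dependencies $\bar z_j\mapsto y_j$ for all $j\le i$.
\end{itemize}
This holds vacuously on $\emptyset$. So the converse half of the expressive completeness theorem for $\FOT$ yields an $\FOT$-formula $\mathrm{Fun}_i(\bar r)$ defining it. Alternatively, one can write it directly using inclusion atoms and the dependence atoms definable in $\FOTimp$. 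A nonempty team $Y\subseteq X$ satisfies $\mathrm{Fun}_i$, where $X$ already codes $f_1,\ldots,f_{i-1}$, exactly when $Y(\bar r)=\{(\bar a,f_1(\bar a),\ldots,f_i(\bar a),\bar b)\}$ for some function $f_i$ agreeing with the coding in $X$. Conversely, every $f_i$ arises from such a subteam.

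\emph{Step 3: the translation.} Put $\Phi^\circ:=\Psi_1$, where $\Psi_{n+1}:=\theta^\flat$ from Lemma~\ref{lemma:flat}, and
\[
\Psi_i:=\mathrm{Fun}_i\to\Psi_{i+1}\ \text{ if } Q_i=\forall,\qquad
\Psi_i:=\wnot(\mathrm{Fun}_i\to\wnot\Psi_{i+1})\ \text{ if } Q_i=\exists.
\]
I will prove by downward induction on $i$ the following invariant. For every nonempty team $X$ coding $f_1,\ldots,f_{i-1}$ in the above sense,
\[
M,X\models\Psi_i \iff (M,f_1,\ldots,f_{i-1})\models Q_if_i\cdots\forall\bar x\,\theta .
\]
\begin{itemize}
\item \emph{Universal case.} The empty subteam is harmless by the empty team property.
\item \emph{Existential case.} On nonempty $X$, the formula $\wnot(\mathrm{Fun}_i\to\wnot\Psi_{i+1})$ says that some subteam $Y$ satisfies $\mathrm{Fun}_i$ and $M,Y\not\models\wnot\Psi_{i+1}$. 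The latter forces $Y\neq\emptyset$, so $Y$ genuinely codes a function.
\item \emph{Base case.} By locality, $Y\upharpoonright\bar x\bar y=M^{\bar x}[\bar F/\bar y]$. Lemma~\ref{lemma:flat} then gives that $\theta^\flat$ holds iff $\theta$ holds at every $s\in M^{\bar x}$, i.e.\ $\forall\bar x\,\theta$.
\end{itemize}
The full team $M^{\bar r}$ is nonempty and codes no functions, so $i=1$ gives the theorem.

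\emph{Expected main obstacle.} The delicate point is Step 2 together with the bookkeeping in Step 3. One has to make sure that subteams of an already-coded team, cut down by $\mathrm{Fun}_i$, are exactly the codes of extensions by one more function. In particular, totality must keep the not-yet-chosen coordinates $y_{i+1},\ldots,y_n$ ranging over all of $M$, so that later quantifiers still see every option. The interaction of $\wnot$ with the empty team in the existential clause must also be handled carefully. I would first try to write $\mathrm{Fun}_i$ explicitly with inclusion atoms (totality) and $\dep$-atoms, falling back on the expressive completeness theorem if the explicit formula becomes awkward.
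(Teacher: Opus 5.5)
Your proposal is correct and is essentially the paper's proof. You use a function-quantifier normal form with normal occurrences, code each $f_i$ by the subteams cut out by a totality-plus-dependence formula, use $\to$ for $\forall f_i$ and $\wnot(\cdot\to\wnot\cdot)$ for $\exists f_i$, and apply Lemma~\ref{lemma:flat} at the base. The paper only differs in bookkeeping: it inducts on $\forall\exists$-pairs, carrying already-fixed functions in auxiliary variables $\bar u$, and writes the coding formula explicitly as $\mathrm{Tot}\land\dep$, which is your ``direct'' option.

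One small correction: the totality clause of $\gamma_i(R)$ is false, not vacuous, on $R=\emptyset$. To invoke the converse half of the $\FOT$ completeness theorem you should instead take $\neg\exists\bar v\,R(\bar v)\lor(\ldots)$. This costs nothing, since empty subteams are handled by the empty team property anyway.
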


\newcommand{\Tot}{\mathrm{Tot}}
\newcommand{\Code}{\mathrm{Code}}

\begin{proof}
We may assume, using the prenex normal form of second-order logic and dummy function variables, that $\Phi$ has the following form:
\[
\forall f_1\,\exists f_2 \,\ldots\, \forall f_{2n-1}\,\exists f_{2n} \,\forall \bar x\,\theta,
\]
where $\theta$ is quantifier-free. We may moreover assume, after the usual
flattening and renaming of function terms, that every occurrence of a function
symbol in $\theta$ is normal, i.e., has the form
\[
f(\bar z_f) 
\]
where the tuples $\bar z_f$ consist of pairwise distinct variables from
$\bar x$. For each such term $f(\bar z_f)$ we introduce a fresh variable. 

For the sake of the induction argument we prove the slightly stronger statement:
\begin{quote}
Let $\Phi$ be a  second-order $\Pi^1_{2n}$-formula in the signature $\tau \cup \set{g_1,\ldots,g_k}$ of the form described above:
\[
\forall f_1\,\exists f_2 \,\ldots\, \forall f_{2n-1}\,\exists f_{2n} \,\forall \bar x\,\theta.
\]
Then there is an \FOTimp-formula 
\[
\Phi^\circ(\bar x,y_1,\ldots,y_{2n},u_1,\ldots,u_k)
\]
such that 
$$
(M,\bar g) \models \Phi \quad\text{\iff}\quad 
M,M^{\bar x\bar y}[\bar G/\bar u] \models \Phi^\circ(\bar x, \bar y,\bar u),
$$
where $G(s)=g(s(\bar z_g))$.
\end{quote}

The idea of the translation is that each quantified function $f_i$ and the corresponding term $f_i(\bar z_{f_i})$ is represented by the variable $y_i$ whereas the function symbols $g_i$ occurring free are encoded by the variables $u_i$. It is important to note that on the team semantics side only the interpretations of $g_i$'s are fixed in the team to correspond to their interpretations in $(M,g_1,\ldots,g_k)$.

For the base case $n=0$, we let $\Phi^\circ$ be $\theta^\flat$, the flat
$\FOTimp$-translation of the quantifier-free matrix $\theta$ of $\Phi$.
Lemma \ref{lemma:flat} shows that it has the right properties.

For the induction step, suppose that
\[
\Phi=\forall f_1\,\exists f_2\,\Psi,
\]
where $\Psi\in\Pi^1_{2n-2}$, and assume that the claim holds for
$\Psi$ and $\Psi^\circ$. Let $y_1$ and $y_2$ be the variables corresponding to the
normal occurrences $f_1(\bar z_{f_1})$ and $f_2(\bar z_{f_2})$, respectively.

We define the formula
\[
\Tot(\bar v) := \wforall \bar w\, \wnot\bigl((\bar v=\bar w)\to \bot \bigr),
\] 
where $\bar w$ is a tuple of fresh variables of the same length as $\bar v$.
If $X$ is nonempty, then $M,X\models \Tot(\bar v)$ iff  $X(\bar v)$ is all 
of $M^{|\bar v|}$.

Define
\begin{align*}
\Code_{f_1}&:=\Tot(\bar x,y_2,\ldots, y_{2n})\land\dep(\bar z_{f_1},y_1)
\qquad\text{and}\qquad  \\
\Code_{f_2}&:=\Tot(\bar x, y_3,\ldots,y_{2n})\land\dep(\bar z_{f_2},y_2),
\end{align*}
Put
\[
\Phi^\circ(\bar x,\bar y,\bar u)
 :=
 \Code_{f_1}\to  \wnot\bigl(\Code_{f_2}\to\wnot\Psi^\circ (\bar x,\bar y,\bar u)\bigr).
\]

First suppose $(M,\bar g)\models\Phi$, and let $X=M^{\bar x\bar y}[\bar G/\bar u]$, where $G(s)=g(s(\bar z_g))$.

Consider any subteam $Y\subseteq X$ such that
\[
M,Y\models\Code_{f_1}.
\]
If $Y=\emptyset$, there is nothing to prove by the empty team property. Otherwise, $Y$ codes a
total function $f_1:M^{|\bar z_{f_1}|}\to M$ such that
\[
Y=M^{\bar x,y_2,\ldots,y_{2n}}[\bar G/\bar u][F_1/y_1],
\]
where $F_1(s)=f_1(s(\bar z_{f_1}))$.
Since $(M,\bar g)\models\Phi$, there is a function
$f_2$ such that
\[
(M,\bar g,f_1,f_2)\models\Psi.
\]
Let $Z\subseteq Y$ be the subteam 
$M^{\bar x y_3,\ldots,y_{2n}}[\bar G/\bar u][F_1/y_1][F_2/y_2]$, where $F_2(s)=f_2(s(\bar z_{f_2}))$.
Then,
\[ M,Z \models \Code_{f_2}
\]
and, by the choice of $f_2$ and the induction hypothesis, 
\[ M,Z \models \Psi^{\circ}(\bar x, \bar y,\bar u).
\] 
Hence, $Y$ satisfies $\wnot\bigl(\Code_{f_2}\to\wnot\Psi^\circ(\bar x, \bar y,\bar u)\bigr)$.
Since $Y$ was arbitrary satisfying $\Code_{f_1}$, $M,X\models\Phi^\circ(\bar x, \bar y,\bar u)$.

Conversely, suppose $M,X\models\Phi^\circ(\bar x, \bar y,\bar u)$, where $X=M^{\bar x\bar y}[\bar G/\bar u]$, and fix an arbitrary interpretation of the function symbol $f_1$. Let   $Y=M^{\bar x,y_2,\ldots,y_{2n}}[\bar G/\bar u][F_1/y_1]$ as in the proof above.
Since $M,Y\models\Code_{f_1}$, the translation
yields a nonempty $Z\subseteq Y$ satisfying
\[
M,Z\models\Code_{f_2}\land\Psi^\circ(\bar x, \bar y,\bar u).
\]
The totality and dependence conditions ensure that $Z$ has exactly
the form above for some total function $f_2$, that is, $Z=M^{\bar x y_3,\ldots,y_{2n}}[\bar G/\bar u][F_1/y_1][F_2/y_2]$. The induction hypothesis
therefore gives $(M,\bar g,f_1,f_2)\models\Psi$.
As $f_1$ was arbitrary, $(M,\bar g)\models\Phi$.

This completes the induction. The statement of the theorem follows by taking $k=0$ and $\bar r=\bar x\bar y$.
\end{proof}

We can use the previous result to relate the complexity of the validity problem of $\FOTimp$ to that of second-order logic.

\begin{cor} The validity problem of $\FOTimp$ is equivalent to that of second-order logic.
\end{cor}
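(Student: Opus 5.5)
We prove the equivalence as a pair of many-one reductions, one in each direction, both computable.

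\emph{From $\FOTimp$-validity to second-order validity.} We use item~(\ref{cccc}) of Proposition~\ref{prop4.2}. Given an $\FOTimp$-formula $\varphi$ with $\FV(\varphi)=\bar x$, we effectively compute the second-order sentence $\varphi^*$ in the signature expanded by a fresh relation symbol $R$ of arity $|\bar x|$. Validity of $\varphi$ means $M,X\models\varphi$ for every $M$ and every team $X$. By locality (item~(2)) it suffices to consider teams with domain exactly $\bar x$. Such teams correspond bijectively to relations $\rel(X)\subseteq M^{|\bar x|}$. Hence $\varphi$ is valid iff $\varphi^*$ holds in every expansion $(M,R)$, i.e.\ iff $\varphi^*$, equivalently $\forall R\,\varphi^*$, is a valid second-order sentence. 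If $\bar x$ is empty, the only teams are $\emptyset$ and $\{\emptyset\}$; the former is handled by the empty team property and the latter by item~(\ref{aaaa}).

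\emph{From second-order validity to $\FOTimp$-validity.} Given a second-order sentence $\Phi$, we effectively bring it to the prenex normal form used in the proof of Theorem~\ref{thm:sol} (skolemizing first-order existentials into function quantifiers and encoding relation variables by functions as usual). We then build $\Phi^\circ(\bar r)$, so that $M\models\Phi$ iff $M,M^{\bar r}\models\Phi^\circ$. Validity of $\Phi^\circ$ quantifies over all teams, not just full ones, so we instead output
\[
\Tot(\bar r)\to\Phi^\circ(\bar r).
\]
A subteam $Y$ of any team satisfies $\Tot(\bar r)$ iff $Y=\emptyset$ or $Y(\bar r)=M^{|\bar r|}$. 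By locality, the second case means $Y\mathop\upharpoonright\{\bar r\}=M^{\bar r}$; the first is covered by the empty team property. Thus the implication is valid iff $M,M^{\bar r}\models\Phi^\circ$ for every $M$, i.e.\ iff $\Phi$ is valid. If $\bar r$ happens to be empty, $\Phi^\circ$ is a sentence and item~(\ref{aaaa}) applies directly.

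\emph{Main obstacle.} The delicate point is the computability of both translations and the normal-form step for $\Phi$. Arbitrary second-order sentences, which may contain relation variables and first-order existentials, must be reduced to the $\forall f_1\exists f_2\cdots\forall\bar x\,\theta$ form with normal occurrences. This needs some care in empty or one-element domains, where coding relations as functions, e.g.\ via characteristic functions comparing values to two witnesses, can break down. We would handle the small domains separately: decide $\Phi$ directly on the one-element structure (the empty domain being excluded by convention) and conjoin the resulting finite check into the reduction. With both reductions in place, the two validity problems are mutually many-one reducible, hence equivalent.
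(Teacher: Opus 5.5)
Your proposal is correct and is essentially the paper's proof: item~(\ref{cccc}) of Proposition~\ref{prop4.2} gives the reduction to second-order validity, and Theorem~\ref{thm:sol} guarded by $\mathrm{Tot}(\bar r)$ gives the reduction back; your intuitionistic guard $\mathrm{Tot}(\bar r)\to\Phi^\circ(\bar r)$ has the same validity as the paper's $\mathrm{Tot}(\bar r)\wimp\Phi^\circ(\bar r)$, because the subteams of arbitrary teams are again arbitrary teams. Your remark on one-element domains identifies a real detail the paper passes over in its ``we may assume'' normal-form step: there function quantifiers are trivial, so relation variables cannot be coded by functions. Your case split on the finitely many one-element structures is a sound way to handle it.
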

\begin{proof}  By \eqref{cccc} of Proposition~\ref{prop4.2}, $\FOTimp$ validity can be reduced to
second-order validity. Conversely, by Theorem~\ref{thm:sol} for any second-order sentence $\Phi$ it holds that
\[
\models \Phi
\quad\Longleftrightarrow\quad
\models
\Tot(\bar r)\wimp\Phi^\circ(\bar r).\qedhere
\]
\end{proof}

Theorem \ref{thm:sol}  allows us to also conclude that the usual universal quantifier of dependence logic is not expressible in $\FOTimp$.
Let us recall the semantics of the duplicating universal quantifier and the tensor disjunction.
\[
\begin{aligned}
    M,X \models \phi \vee \psi
        &\iff \textrm{exists $Y,Z$ s.t. $Y\cup Z=X$, }  M,Y \models \phi, \textrm{ and }  M,Z \models \psi, \\
   M,X \models \forall x\varphi
        &\iff M,X(M/x)\models\varphi .
\end{aligned}
\]
\begin{cor} The logic $\FOTimp$ is not semantically closed under the duplicating quantifier $\forall$ of dependence logic, i.e., it is not true that for every $\phi \in \FOTimp$ the formula $\forall x \phi$ is expressible in \FOTimp.
\end{cor}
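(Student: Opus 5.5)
Suppose for contradiction that $\FOTimp$ is closed under the duplicating quantifier $\forall$. Then $\forall x\phi$ has an $\FOTimp$-equivalent for every $\phi$. Iterating this, for every $\FOTimp$-formula $\phi(\bar r)$ the sentence $\forall \bar r\,\phi(\bar r)$ is equivalent to an $\FOTimp$-sentence $\sigma_\phi$. This needs care: closure must hold at each stage, but $\forall r_2\phi$ is again an $\FOTimp$-formula up to equivalence, so induction on the length of $\bar r$ suffices.

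Next we evaluate this on the team $\{\emptyset\}$. By definition, $M,\{\emptyset\}\models\forall\bar r\,\phi$ iff $M,\{\emptyset\}(M/\bar r)\models\phi$, and $\{\emptyset\}(M/\bar r)$ is exactly the full team $M^{\bar r}$. Taking $\phi:=\Phi^\circ(\bar r)$ from Theorem~\ref{thm:sol} for an arbitrary second-order sentence $\Phi$, we get
\[
M\models\Phi \iff M,M^{\bar r}\models\Phi^\circ(\bar r)\iff M\models\sigma_{\Phi^\circ}.
\]
By item~\eqref{aaaa} of Proposition~\ref{prop4.2}, $M\models\sigma_{\Phi^\circ}$ iff $M\folmodels\sigma_{\Phi^\circ}^\sharp$. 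Hence every second-order sentence would be equivalent to a first-order sentence in the same signature.

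To finish, we pick a second-order sentence with no first-order equivalent. In the empty signature, ``the domain is finite'' is expressible in second-order logic (there is no injective non-surjective function) but not in first-order logic, by compactness. This gives the contradiction.

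The main point to check is the iteration step together with the locality and empty-team conventions. The equivalent of $\forall x\phi$ must behave correctly on the teams $\{\emptyset\}(M/r_1\ldots r_j)$, which holds since equivalence is over all teams. Item~\eqref{aaaa} then applies to the resulting sentence, because its free variables are eventually empty. Alternatively, one could aim the contradiction at validity, since first-order validity is r.e. and second-order validity is not, but the compactness argument is more direct.
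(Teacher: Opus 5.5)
Your proof is correct and follows the same route as the paper. The paper's proof is one line: an $\FOTimp$-equivalent of $\forall\bar r\,\Phi^\circ$ would, via item~\eqref{aaaa} of Proposition~\ref{prop4.2}, make every second-order sentence first-order expressible. You additionally spell out the induction from closure under a single $\forall x$ to closure under a block $\forall\bar r$, and you give a concrete witness with no first-order equivalent (finiteness of the domain); the paper leaves both implicit.
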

\begin{proof}
There are sentences of the form $\forall \bar r \Phi^\circ$  
 that are not expressible in $\FOTimp$ as that would contradict  \eqref{aaaa} of Proposition \ref{prop4.2}.  
\end{proof}
It is worth noting that in dependence logic (in fact, in any team-based logic having the tensor disjunction $\vee$ and $\forall$) sentences give an  upper bound for the complexity of open formulas: given a formula $\phi(\bar x)$ we can define the sentence $$\phi^*:=\forall \bar x\big (\neg R(\bar x)\vee ( R(\bar x) \wedge \phi(\bar x))\big ).$$ Now for all $\M$ and teams $X$
$$\M,X\models \phi(\bar x) \iff (\M,\rel(X))\models \phi^*.  $$
Theorem \ref{thm:sol} shows that for  $\FOTimp$ we have a strong failure of this property.

\section{Discussion}

The results above illustrate two opposite phenomena. On the one hand, the
fragment $\FOTminus$ is very weak in the empty signature. Its formulas only record equality
patterns between variables, constancy of variables, and finite lower bounds on
the size of the model. In this sense, removing primitive disequality leaves very
little room for genuinely new team-semantical behaviour in the empty language.

On the other hand, adding intuitionistic implication changes the situation
dramatically. The contrast with the sentence case is worth emphasizing. By
Proposition~\ref{prop4.2}, intuitionistic implication does not increase expressive power on
sentences: over singleton teams, every $\FOT(\to)$-sentence is equivalent to its
ordinary first-order translation. Thus, at the level of sentences,
$\FOT(\to)$ remains first-order.

For open formulas, however, the situation changes completely. Intuitionistic
implication allows formulas to quantify over subteams of the current team. This
makes it possible to define dependence atoms, and hence to simulate
second-order quantification by considering suitable subteams of a full team. 
We can also view our results through the validity problem of the studied logics.

\begin{table}[ht]
\centering
\begin{tabular}{lcc}
\hline
 & $\tau=\emptyset$ & $\tau=\{ R(\cdot,\cdot)\}$ \\
\hline
$\FOTminus$ & Decidable & $\Sigma^0_1$-complete \\
$\FOT$     & $\Sigma^0_1$-complete & $\Sigma^0_1$-complete \\
$\FOTimp$  & SO-equivalent & SO-equivalent \\
\hline
\end{tabular}
\caption{Complexity of validity for all formulas.}
\label{tab:validity}
\end{table}

The decidability result follows from uniform quantifier elimination.
The effective translations into first-order logic give the
$\Sigma^0_1$ upper bounds. For $\FOTminus$ with a binary relation,
hardness follows from the translation of first-order sentences
$\alpha$ into their weak counterparts $\alpha^\ast$.

For $\FOT$ in the empty signature, the lower bound follows
from the close correspondence in expressive power between
$\FOT$ and first-order logic, using the team itself to encode
a relation.

Consequently, adding intuitionistic implication to the very weak team logic
$\FOTminus$ results in a logic which can express all second-order sentences
in the form of open formulas evaluated on full teams.


\section*{Funding}

The first author was supported by the Swedish Research Council
(Vetenskapsrådet) under grant 2022-01685 for the project ``Foundations for
team semantics: Meaning in an enriched framework''.  The second author was
partially supported by a Research Council of Finland grant No. 375116.

\section*{Declaration of AI use}

The authors used ChatGPT (OpenAI) for proofreading and identifying possible
gaps in arguments on the paper before submission. The authors reviewed all
suggestions and take full responsibility for the content of the article.

\bibliographystyle{abbrv}
\bibliography{refs}

\end{document}